\theoremstyle{definition} 
 \newtheorem{definition}{Definition}[section]
 \newtheorem{remark}[definition]{Remark}
\theoremstyle{plain}      
 \newtheorem{proposition}[definition]{Proposition}
 \newtheorem{theorem}[definition]{Theorem}
\title{Geometric interpretation of simplicial formulas for the Chern-Simons invariant}
\author{Julien March\'e
\footnote{Centre de Math\'ematiques Laurent Schwartz,
\'Ecole Polytechnique,
Route de Saclay, 91128 Palaiseau Cedex, France,
email:\,\tt{marche@math.polytechnique.fr}}
}
\date{}
\DeclareMathOperator{\hol}{Hol}
\DeclareMathOperator{\id}{Id}
\DeclareMathOperator{\tr}{Tr}
\DeclareMathOperator{\map}{Map}
\DeclareMathOperator{\tru}{Trunc}
\newcommand{\C}{\mathbb{C}}
\newcommand{\Z}{\mathbb{Z}}
\renewcommand{\hom}{\rm Hom}
\newcommand{\R}{\mathbb{R}}
\renewcommand{\lg}{\langle}
\newcommand{\ld}{\rangle}
\newcommand{\boL}{\mathcal{L}}
\newcommand{\bog}{\mathfrak{g}}
\newcommand{\bob}{\mathfrak{b}}
\newcommand{\boR}{\mathcal{R}}
\newcommand{\su}{\rm{SU}(2)}
\newcommand{\SL}{\rm{SL}(2,\mathbb{C})}
\newcommand{\PSL}{\rm{PSL}(2,\mathbb{C})}
\renewcommand{\d}{{\rm d}}
\renewcommand{\phi}{{\varphi}}
\renewcommand{\epsilon}{{\varepsilon}}
\newcommand{\mat}[1]{\begin{bmatrix} #1 \end{bmatrix}}
\begin{document}
\maketitle

\section{Introduction}
Chern-Simons theory was first introduced by S. S. Chern and J. Simons in \cite{cs} as secondary characteristic classes: given a Lie group $G$ and a flat $G$-bundle $P$ over a manifold $M$, all Chern-Weil characteristic classes of $P$ has to vanish as $P$ have a flat connection whereas the bundle might be non trivial. The Chern-Simons functional is a non trivial invariant of $G$-bundles with connections.

This theory had at least two unexpected developments:
in his seminal article \cite{witten}, E. Witten located Chern-Simons theory in the context of quantum field theory. He developed a topological theory giving rise to a physico-geometric interpretation of the Jones polynomial. On the other hand, in Riemannian geometry, the Chern-Simons functional provides a geometric invariant of compact manifolds with values in $\R/2\pi^2$ which in the case of 3-dimensional hyperbolic geometry makes with the volume the real and imaginary part of a same complex-valued invariant, see \cite{du,klassen,yoshida}. The work of J. Dupont and W. Neumann (\cite{du,neumann1,neumann0,neumann2}, with contributions of C. Zickert (\cite{dz}) gave a combinatorial formula for the Chern-Simons invariant of a cusped hyperbolic 3-manifold (and more generally pairs $(M,\rho)$ where $M$ is  a closed 3-manifold and $\rho:\pi_1(M)\to \PSL$ satisfies some assumptions), these formula were extended in a quantum setting by S. Baseilhac and R. Benedetti (\cite{bb}). All these approaches are based on group homology considerations and seminal computations of Dupont, which can be found in \cite{du}.

The purpose of this article is to prove exactly the same formulas as W. Neumann obtained but in a more direct and geometric way. The main idea is to fill all tetrahedra of a triangulated 3-manifold with a connection as explicit as possible and compute the contribution of each tetrahedron to the Chern-Simons functionnal. I think that this method simplifies the standard approach as the use of group homology or Bloch group is no longer necessary and we do not need to prove any kind of 5-term relation as it is satisfied for simple geometric reasons. On the other hand, the usual complications as branchings and flattenings are still necessary but they are given a geometric interpretation and their introduction looks more natural than in the standard construction.

The article is organized as follows: in Section 2 we explain generalities on the Chern-Simons invariant, adapting it to our context. In Section 3, we present the combinatorial structure which we will place at each tetrahedron of a triangulation of $M$. The heart of the article is Section 4 were we define the connection on a tetrahedron and almost compute its invariant. Then Section 5 contains standard material for extending the local computation to the global setting. In Section 6, we finish the computation of Section 4 by analyzing the 5-term relation and give the example of the figure-eight knot complement.

This work was completed two years ago when trying to generalize the known formulas for hyperbolic geometry to complex hyperbolic geometry: this work remains to be done. I would like to thank E. Falbel for being at the origin of this work, providing me with motivations and discussions, S. Baseilhac for encouraging me to write this article and W. Neumann for his kind interest.

\section{Generalities on the Chern-Simons functionnal}

\subsection{Generalities}
For a general discussion on classical Chern-Simons theory, we refer to \cite{freed}. Nevertheless, we recall here everything that will be needed for our purposes.
Let $G$ be a complex Lie group and $\bog$ be its Lie algebra. Let $M$ be an oriented 3-manifold with boundary $\Sigma$ and $P$ be a principal $G$-bundle over $M$ with right $G$-action and flat connection $\alpha$. Assuming this bundle is trivial,  one can identify it with $M\times G$ and the flat connection $\alpha$ may be viewed as an element of $\Omega^1(M,\bog)$ satisfying the flatness equation $$\d\alpha+\frac{1}{2}[\alpha\wedge\alpha]=0.$$
Denote by $\Omega^1_{\rm flat}(M,\bog)$ the space of all flat connections. Considering different trivializations corresponds to the following action of the gauge group: $\map(M,G)$ acts on $\alpha$ on the right by the formula $\alpha^g=g^{-1} \alpha g+g^{-1}\d g$.

Recall that given a path $\gamma:[0,1]\to M$, we define the holonomy of $\alpha$ along $\gamma$ as the solution at $t=1$ of the order one equation $(\frac{\d}{\d t}\hol_{\gamma}^t\alpha )(\hol_{\gamma}^t\alpha)^{-1}=-\alpha(\frac{\d\gamma}{\d t})$ such that $\hol_{\gamma}^0\alpha=1$. We denote it by $\hol_{\gamma}\alpha$ and it satisfies the two following properties: 

\begin{itemize}
\item[-] $\hol_{\gamma}\alpha^g=g(\gamma(1))^{-1}(\hol_{\gamma}\alpha) g(\gamma(0))$.
\item[-] $\hol_{\gamma\delta}\alpha=\hol_{\delta}\alpha\hol_{\gamma}{\alpha}$ where $\gamma$ and $\delta$ are two composable paths, that is $\delta(0)=\gamma(1)$.
\end{itemize}

It is well-known that the set of isomorphic classes of $G$-principal bundles with flat connection is in bijective correspondence via the holonomy map to conjugacy classes of morphisms from $\pi_1(M)$ to $G$. 

In this article, $G=\PSL$. Its universal cover is $\SL$. Because $\pi_1(\PSL)=\Z_2$, not all principal $G$-bundles on $M$ are trivial. The obstruction of trivializing such a principal bundle is a class in $H^2(M,\Z_2)$. One can recover this class as the obstruction of extending the monodromy representation $\rho:\pi_1(M)\to \PSL$ to $\SL$ (see \cite{gol,culler} for instance). Here we will consider trivializable $\PSL$-bundles $P$, hence we will suppose that the monodromy lifts to $\SL$ but an important point is that neither the trivialization of $P$ nor the lift of the monodromy morphism are part of the data.
 
\subsection{The definition of the Chern-Simons functionnal}
Let $\lg\cdot,\cdot\ld$ be an invariant symmetric bilinear form on $\bog$. For a flat connection $\alpha$, we set
$$CS(\alpha)=\frac{1}{12}\int_M \lg \alpha\wedge[\alpha\wedge\alpha]\ld.$$
 A direct computation shows that for $g$ in $\map(M,G)$ one has:
\begin{equation}\label{cs}
CS(\alpha^g)=CS(\alpha)+c(\alpha,g)\quad\text{ where }
\end{equation}

$$c(\alpha,g)=\frac{1}{2}\int_{\Sigma}\lg g^{-1}\alpha g\wedge g^{-1}\d g\rangle - \int_M \lg g^{-1}\d g\wedge[g^{-1}\d g\wedge g^{-1}\d g]\rangle.$$

The second term of this equation may be interpreted in the following way: let $\theta$ be the left-invariant Maurer-Cartan form on $G$ and $\chi$ be the Cartan 3-form on $G$, that is $\chi=\frac{1}{12}\lg\theta\wedge [\theta\wedge\theta]\ld$. Then we set $W(g)=\int_M \lg g^{-1}\d g\wedge[g^{-1}\d g\wedge g^{-1}\d g]\rangle=\int_M g^* \chi$.

This is called the Wess-Zumino-Witten functionnal of $g$. Modulo 1, it only depends on the restriction of $g$ to $\Sigma$ provided that $\chi$ is the image of some element of $H^3(G,\Z)$ inside $H^3(G,\C)$.

One can check that $c(\alpha,g)$ is a 1-cocycle which allows to construct a $\C$-bundle $\boL_{\Sigma}$ over $\hom(\Sigma,G)/G$ by taking the quotient of $\Omega^1_{\rm flat}(\Sigma,\bog)\times \C$ by the following gauge group action: $(\alpha,z)^g=(\alpha^g,e^{2i\pi c(\alpha,g)}z)$.
The Chern-Simons invariant of $\alpha$ may be interpreted as an element of $\boL_{\Sigma}$ lying above the gauge equivalence class of the restriction of $\alpha$ to the boundary.
\subsection{Boundary conditions}
Suppose that $G=\PSL$ and let $PB$ be the quotient of the Borel subgroup $B$ of upper triangular matrices by the center $\{\pm 1\}$. We will denote by $\bob$ the Lie algebra of $B$.
Let us consider triples (P,s,l) where:
\begin{itemize}
\item[-] $P$ is a trivializable flat $G$-bundle over $M$.
\item[-] $s$ is a flat section over the boundary of $M$ of the fibration $P\times_G\C P^1$ associated to $P$  for the action of $\PSL$ on $\C P^1$.
\item[-] Given a loop $\gamma:S^1\to \partial M$, we define $\lambda(\gamma)\in \C^*$ as the holonomy along $\gamma$ of the tautological line bundle lying over $s$. Then, there exists a continuous lift $l(\gamma)\in \C$ such that $\exp(l(\gamma))=\lambda(\gamma)$ and such that $l(\gamma.\delta)=l(\gamma)+l(\delta)$ for loops $\gamma$ and $\delta$ with the same base point.
\end{itemize}
A reformulation of the last assertion is that on the boundary, the $\C^*$-bundle defined by $s$ lifts to a $\C$-bundle via the log map. Here, the lift is part of the data.
There is an obvious equivalence relation between two such triples and we define $\boR(M)$ as the set of equivalent classes. We give now a gauge-theoretical description of $\boR(M)$.
\begin{proposition}\label{equivalence}
The set $\boR(M)$ of equivalence classes of triples $(P,s,l)$ is isomorphic to the quotient $\Omega^{1}_{\partial}(M,\bog)/\map_{\partial}(M,G)$ where
\begin{itemize}
\item[-] $\Omega^{1}_{\partial}(M,\bog)$ is the set of connections $\alpha\in \Omega^1(M,\bog)$ satisfying the flatness equation and such that their restrictions to the boundary lie in $\bob$.
\item[-] The group $\map_{\partial}(M,G)$ consists in maps from $M$ to $G$ whose restriction to the boundary takes values in $PB$ and is homotopic to $0$.
\end{itemize}
\end{proposition}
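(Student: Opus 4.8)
The plan is to construct explicit maps in both directions between $\Omega^1_\partial(M,\bog)/\map_\partial(M,G)$ and $\boR(M)$ and check they are mutually inverse; essentially everything reduces to unwinding the definitions, the only genuine work being the bookkeeping of the logarithm $l$.

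\emph{Forward map.} Given $\alpha\in\Omega^1_\partial(M,\bog)$, I take for $P$ the trivial bundle $M\times G$ with flat connection $\alpha$; this is legitimate since we only meet trivializable bundles. Writing $\bob=\{\mat{a & b\\ 0 & -a}\}$, the subalgebra $\bob$ is exactly the stabilizer of the line $[1:0]\in\C P^1$, so the boundary hypothesis $\alpha|_{\partial M}\in\bob$ says precisely that the constant section $s=[1:0]$ of the associated $\C P^1$-bundle is parallel, producing the datum $s$. The line bundle over $s$ carries the connection induced by the diagonal part $a$ of $\alpha|_{\partial M}$, and $a$ is closed on $\partial M$: restricting the flatness equation $\d\alpha+\frac12[\alpha\wedge\alpha]=0$ to the boundary and projecting onto the Cartan direction, the only contribution to the diagonal of $[\alpha\wedge\alpha]$ comes from the bracket of the upper and lower nilpotent parts, and the lower part vanishes on $\partial M$, whence $\d a=0$ there. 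Consequently $l(\gamma):=-\oint_\gamma a$ (up to the normalization fixed by the $G$-equivariant line bundle, which records the passage $\SL\to\PSL$) depends only on the homology class of $\gamma$, is additive, and satisfies $\exp(l(\gamma))=\lambda(\gamma)$. This produces a triple $(P,s,l)$.

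\emph{Gauge invariance and the inverse.} The two defining conditions of $\map_\partial(M,G)$ correspond exactly to the two extra data $s$ and $l$. If $g\in\map_\partial(M,G)$ then $g|_{\partial M}$ takes values in $PB$, which stabilizes $[1:0]$, so the bundle automorphism $g$ carries $s$ to itself and $(P,s)$ is unchanged. Under such a gauge the diagonal part transforms as $a\mapsto a+\d\log t$ with $t$ the diagonal entry, so $l$ shifts by the winding of the boundary map; since $PB$ is homotopy equivalent to $\C^*$, the requirement that $g|_{\partial M}$ be nullhomotopic is exactly the vanishing of this winding, hence $l$ is preserved. Thus the forward map descends to the quotient. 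For the inverse I would trivialize a given $P$, use the flat section $s$ to gauge $\alpha$ into $\bob$ along the boundary (bringing $s$ to $[1:0]$), and then spend the remaining $PB$-valued gauge freedom—whose winding I have just identified with the shift of the logarithm—to normalize the canonical lift $l_\alpha$ so that it matches the prescribed $l$.

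\emph{Main obstacle.} The delicate point is precisely this last normalization, that is, surjectivity at the level of $l$ together with well-definedness of the inverse. Concretely one must show that the set of lifts of a fixed $\lambda$, a torsor under $\hom(H_1(\partial M),2\pi i\Z)$, is exhausted by the logarithms $l_\alpha$ coming from connections, and that two connections induce the same lift if and only if they differ by a nullhomotopic $PB$-valued boundary gauge. This is where the topology of $G=\PSL$ enters, through $\pi_1(PB)=\Z$, the image of $\pi_1(PB)$ in $\pi_1(\PSL)=\Z_2$, and the fact that the lift of the monodromy to $\SL$ is not part of the data. I would handle it by working on a collar $\partial M\times[0,1]$, constructing the required gauge transformation from a chosen closed diagonal form with prescribed periods and checking that the $\Z_2$-obstruction to extending it over $M$ does not obstruct the matching of $l$; injectivity then follows from the same dictionary, since two connections giving equivalent triples differ by a boundary $PB$-valued gauge of trivial winding, hence by an element of $\map_\partial(M,G)$.
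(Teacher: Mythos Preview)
Your argument is essentially the paper's: build the forward map by taking the trivial bundle with connection $\alpha$, the constant section $s=[1{:}0]$, and $l(\gamma)=-\int_\gamma a$ (the paper writes your $a$ as $\mu(\alpha)$), and invert by trivializing $P$, gauging $s$ to $[1{:}0]$, then spending a boundary $PB$-valued gauge, unique up to homotopy, to match the prescribed $l$. The only difference is one of emphasis: you isolate the extension of this boundary gauge to a map $M\to G$ as the ``main obstacle'' and sketch a resolution via the winding/$\Z_2$ dictionary, whereas the paper dispatches it in a single sentence (``It is always possible to extend $g$ to an element of $\map(M,G)$''); neither treatment spells this step out in full, so your level of rigor matches the paper's.
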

\begin{proof}
One can construct maps between these two spaces inverse to each other.
Given a connection $\alpha\in \Omega^{1}_{\partial}(M,\bog)$ one can give a flat structure on the trivial bundle $M\times G$. The holonomy of $\alpha$ takes its values in $PB$ and hence the vector $\C\oplus\{0\}\subset\C^2$ is preserved on the boundary and hence gives a flat section of $M\times\C P^1$. Consider the map $\mu:\bob\to\C$ which associates to an element of $\bob$ its upper left entry. Then for any path $\gamma:S^1\to \partial M$ one has $\lambda(\gamma)=\exp(-\int_{\gamma}\mu(\alpha))$. Hence we set $l(\gamma)=-\int_{\gamma}\mu(\alpha)$.
Replacing $\alpha$ with $\alpha^g$ for $g\in\map_{\partial}(M,G)$, one obtains an equivalent flat $G$-bundle with the same trivialization on the boundary. By assumption on $g$, $\mu(\alpha^g)-\mu(\alpha)$ is an exact form and hence, the map $l$ is unchanged.

Reciprocally, given a triple  $(P,s,l)$, one can identify the trivializable flat $G$-bundle $P$ with $M\times G$ by choosing a trivialization. The section $s$ gives a section of the trivial $\C P^1$-bundle over the boundary. As it has degree 0, one can suppose up to the action of the full gauge group that $s$ is constant equal to the class of $\C\oplus\{0\}$. The flat connection on $P$ gives a form $\alpha$ in $\Omega^1_{\partial}(M,\bog)$. The functions $l$ and $-\int\mu(\alpha)$ on loops on the boundary may differ, but there is a unique element $g$ in $\map(\partial M, PB)$ up to homotopy such that $l=-\int \mu(\alpha^g)$. It is always possible to extend $g$ to an element of $\map(M,G)$: the connection $\alpha^g$ is the desired representant of the triple $(P,s,l)$.
\end{proof}

Using the gauge-theoretical description above, we can represent a triple $(P,s,l)$ by an element $\alpha\in \Omega^1_{\partial}(M,\bog)$. We have the following proposition:
\begin{proposition}
The map $CS:\boR(M)\to\C/\Z$ is well-defined.
\end{proposition}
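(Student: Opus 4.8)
The plan is to use the gauge-theoretic description of Proposition \ref{equivalence} to reduce the statement to a single integrality property of the cocycle $c$. By that proposition a class in $\boR(M)$ is represented by a connection $\alpha\in\Omega^1_{\partial}(M,\bog)$, and two representatives $\alpha$ and $\alpha^g$ differ by a gauge transformation $g\in\map_{\partial}(M,G)$. In view of equation (\ref{cs}), the assignment $\alpha\mapsto CS(\alpha)\bmod\Z$ descends to $\boR(M)$ if and only if
\[
c(\alpha,g)=\tfrac12\int_{\Sigma}\lg g^{-1}\alpha g\wedge g^{-1}\d g\ld-W(g)\in\Z
\]
for every $g\in\map_{\partial}(M,G)$, so the whole proof reduces to establishing this single fact.

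First I would show that $c(\alpha,g)\bmod 1$ depends only on the boundary data $(\alpha|_{\Sigma},g|_{\Sigma})$. The boundary term is manifestly a function of $g|_{\Sigma}$ alone. For the Wess--Zumino term one glues two extensions of a fixed boundary map along $\Sigma$ to form a map $\hat g$ on the closed double $M\cup_{\Sigma}(-M)$; since $\chi$ represents a class in the image of $H^3(G,\Z)$, the difference of the two values of $W$ equals $\int\hat g^{*}\chi$ over a closed $3$-manifold, hence an integer. Thus $c(\alpha,\cdot)\bmod 1$ defines a function on boundary maps $\Sigma\to PB$ that admit an extension to $M$.

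The decisive simplification comes from the boundary condition that $\alpha|_{\Sigma}$ and $g^{-1}\d g|_{\Sigma}$ lie in $\bob$. On $\bob$ the invariant form $\lg\cdot,\cdot\ld$ annihilates the nilpotent part, so it factors through the projection $\mu\colon\bob\to\C$ onto the diagonal; both the boundary integrand and the $2$-form governing the variation of $W$ therefore involve only the abelian Cartan-valued $1$-form $\mu(g^{-1}\d g)$, the logarithmic derivative of the diagonal entry. Using this I would prove that $c(\alpha,g)\bmod 1$ is invariant under homotopies of $g|_{\Sigma}$ inside $\map(\Sigma,PB)$: differentiating along a homotopy $g_s$ with $g_s^{-1}\dot g_s\in\bob$, the exact variation formula for the Cartan $3$-form expresses $\frac{d}{ds}W(g_s)$ as a boundary integral which, after the abelian reduction and an integration by parts using that flatness forces $\mu(\alpha)$ to be closed on $\Sigma$ (because $\mu$ kills the commutator $[\bob,\bob]\subset\mathfrak n$), cancels exactly the variation of the boundary term, so the derivative vanishes.

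Granting this homotopy invariance, the hypothesis that $g|_{\Sigma}$ is homotopic to a constant in $PB$ lets me deform it to a constant map, whose obvious extension is the constant gauge transformation $g'\equiv e$; for this one the boundary term vanishes and $W(e)=0$, so $c(\alpha,g)\equiv c(\alpha,e)=0\bmod 1$, which is precisely the required integrality. The main obstacle is the cancellation in the homotopy-invariance step: one must check that the boundary $2$-form produced by varying the Wess--Zumino term matches, after the Borel reduction via $\mu$, the variation of $\tfrac12\int_{\Sigma}\lg g^{-1}\alpha g\wedge g^{-1}\d g\ld$. This is where the flatness of $\alpha$ and the precise normalization of $\lg\cdot,\cdot\ld$ making $\chi$ integral both enter, and it is the only step I expect to require genuine care.
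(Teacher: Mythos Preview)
Your strategy is sound and would lead to a correct proof, but the paper takes a shorter and more direct route. Rather than establishing homotopy invariance of $c(\alpha,g)\bmod 1$ and then deforming $g|_{\Sigma}$ to a constant, the paper treats the two summands of $c(\alpha,g)$ separately and shows each is an integer outright. For the Wess--Zumino term it uses that $PB\simeq S^1$, so $g|_{\Sigma}:\Sigma\to PB$ bounds a map $\tilde g:N\to PB$ on some $3$-manifold $N$ with $\partial N=\Sigma$; then $W(g)\equiv\int_N\tilde g^*\chi\pmod\Z$, and this integral vanishes because the Cartan $3$-form $\chi$ restricts to zero on $PB$ (the $\C$-trilinear alternating form $\lg A,[B,C]\ld$ is identically zero on the $2$-dimensional algebra $\bob$). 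For the boundary term the paper makes the same abelian reduction through $\mu$ that you describe and then observes directly that $\mu(\alpha)$ is closed while the null-homotopy hypothesis forces $\mu(g^{-1}\d g)$ to be exact, so the integral is zero by Stokes.

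If you actually carry out your variation computation you will find exactly these phenomena reappearing: both $\frac{d}{ds}W(g_s)$ and the variation of the boundary term vanish \emph{separately} (the first because $\chi|_{PB}=0$ kills the boundary integral coming from the variation of $W$, the second because $\mu(\alpha)$ is closed), so the ``cancellation'' you anticipate is trivial and the step you flag as delicate dissolves. Your packaging via homotopy invariance is conceptually pleasant, but it costs you the construction of a one-parameter family of extensions $g_s:M\to G$; the paper's argument is shorter and needs only a single auxiliary bounding manifold mapping into $PB$.
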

\begin{proof}
We have to check that the formula which defines $CS(\alpha)$ for $\alpha\in \Omega^1_{\partial}(M,\bog)$ is invariant under the action of $\map_{\partial}(M,G)$ modulo 1. It is a consequence of the following fact: for $\alpha\in \Omega^1_{\partial}(M,\bog)$ and $g\in \map_{\partial}(M,G)$, $c(\alpha,g)\in \Z$.
Recall the following formula: $c(\alpha,g)=\frac{1}{2}\int_{\partial M}\langle g^{-1}\alpha g\wedge g^{-1}\d g\rangle - W(g)$. 
The group $PB$ is homotopically equivalent to $S^1$. For any surface $\Sigma$ and map $f:\Sigma\to S^1$, there is a 3-manifold $N$ with boundary $\Sigma$ and a map $\tilde{f}:N\to S^1$ which extends $f$. We apply this to the map $g:\partial M\to PB$. One can find a 3-manifold $N$ such that $\partial N=\partial M$ and a map $\tilde{g}:N\to PB$  extending $g$. We can compute the WZW invariant of $g$ with $N$ and we deduce that $W(g)=\int_N \tilde{g}^* \chi \mod \Z$ but $\chi$ restricted to $\bob$ is zero as $\bob$ is 2-dimensional and $\chi$ is a 3-form. Hence we obtain $W(g)\in \Z$.

The first term of $c(\alpha,g)$ is proportional to $\int_{\Sigma}\mu(\alpha)\wedge\mu(g^{-1}\d g)$ because the bilinear form is proportional to the trace of the wedge product. The form $\mu(\alpha)$ is closed and $\mu(g^{-1}\d g)$ is exact by hypothesis. Hence, their product is exact and the integral vanishes by Stokes formula. This proves that $c(\alpha,g)$ belongs to $\Z$ and $CS$ induces a well-defined map from $\boR(M)$ to $\C/\Z$.
\end{proof}

One may give a third interpretation of $\boR(M)$ which corresponds to the holonomy description of the moduli space of flat connections. To give this correspondence, let us choose a base point $x_i$ on each connected component $\Sigma_i$ of the boundary of $M$. We will denote  by $\Pi_1(M)$ the fundamental groupo\"id of $M$ with these base points. Let us consider the set of groupo\"id maps $\rho:\Pi_1(M)\to G$ whose restrictions to the boundary components of $M$ take values in $PB$.

We call a logarithm of $\rho$ a collection of homomorphisms $l_i: \pi_1(\Sigma_i,x_i)\to \C$ such that for all $i$ and $\gamma\in\pi_1(\Sigma_i,x_i)$ one has $\rho(\gamma)=\pm\mat{z&*\\0&z^{-1}}$ where $z=\exp(l_i(\gamma))$.
Finally, two representations $\rho,\rho':\Pi_1(M)\to G$ are said to be equivalent if there is a collection of matrices $g_i\in PB$ such that for any path $\gamma$ between $x_i$ and $x_j$, one has $\rho'(\gamma)=g_j^{-1}\rho(\gamma)g_i$.
One can check easily that $\boR(M)$ is in bijection with the set of equivalence classes of pairs $(\rho,l)$ such that $\rho$ lifts to $\SL$. The bijection consists as usual in associating to a connection $\alpha$ the morphism $\rho(\gamma)=\hol_{\gamma}(\alpha)$. Reciprocally, given $\rho$ and $l$, one can define a pair $(P,s,l)$ by the standard procedure: one defines $P=\widetilde{M}\times G/\Pi_1(M)$ where $\widetilde{M}$ is the disjoint union of the universal coverings of $M$ based at the $x_i$ and $\Pi_1(M)$ acts on both factors by covering transformations and by $\rho$ respectively. One easily constructs the section $s$, and the map $L$ is unchanged.

\subsection{The derivative of the Chern-Simons functionnal}
Let $\alpha$ be a flat connection on a 3-manifold $M$ with boundary $\Sigma$. Recall that $\alpha$ satisfies $\d\alpha+\frac{1}{2}[\alpha,\alpha]=0$ and that we defined $CS(\alpha)=\frac{1}{12}\int_M\lg \alpha\wedge[\alpha\wedge\alpha]\ld$.

By differentiating the flatness equation, one sees that a 1-form $b$ is tangent to $\Omega^1_{\rm flat}(\Sigma,\bog)$ at $\alpha$ if and only if one has $\d b+ [b,\alpha]=0$.
The derivative of $CS$ at $\alpha$ in the direction $b$ is then
\begin{align*}
D_{\alpha}CS(b)&=\frac{1}{4}\int_M \lg b\wedge[\alpha\wedge\alpha]\ld=
-\frac{1}{2}\int_M\lg b\wedge \d\alpha\ld\\
&=\frac{1}{2}\int_M(\d\lg b\wedge \alpha\ld-\lg\d b\wedge \alpha\ld)=\frac{1}{2}\int_{\Sigma}\lg b\wedge\alpha\ld+\frac{1}{2}\int_M\lg[b\wedge\alpha]\wedge \alpha\ld\\
&=\frac{1}{2}\int_{\Sigma}\lg b\wedge\alpha\ld+\frac{1}{2}\int_M\lg b\wedge[\alpha\wedge\alpha]\ld
\end{align*}
We deduce the equation below which will be useful in the sequel.
\begin{equation}\label{derive}
D_{\alpha}CS(b)=\frac{1}{2}\int_{\Sigma}\lg \alpha\wedge b\ld
\end{equation}
This equation may be interpreted by saying that the Chern-Simons functional is a flat map $CS:\boR(M)\to \boL_{\Sigma}$ for some natural (non flat) connection on $\boL_{\Sigma}$, see \cite{freed}.

\section{Local combinatorial data}
In this article, we will work in the framework of 3-dimensional real hyperbolic geometry and hence suppose that $G$ is $\SL$, $\bog$ consists in trace free $2\times 2$ matrices and we set $\lg A,B\ld=-\frac{1}{4\pi^2}\tr(AB)$. This implies that $\chi=\frac{1}{12}\langle\theta\wedge[\theta\wedge\theta]\rangle$ is the positive generator of $H^3(\SL,\Z)$ in $H^3(\SL,\C)$ as we can compute that $\int_{\su}\chi=1$.

\subsection{Elementary Polyhedron}

Let $\Delta$ be a simplicial tetrahedron, that is a set of 4 elements. We will often use  $x,y,z,t$ as variables describing the elements of $\Delta$ and write them without comma to have more compact expressions.

\begin{definition}
The polyhedron $P(\Delta)$ associated to $\Delta$ is a polyhedral complex whose vertices are parametrized by orderings of the elements of $\Delta$.

There are three types of edges:
 $E_1=\{xyzt,yxzt\}$, $E_2=\{xyzt,xzyt\}$,  $E_3=\{xyzt,xytz\}$ which consist in transposing 2 consecutive vertices.

There are three types of 2-cells:
\begin{itemize}
\item[-]$\{xyzt,yxzt,xytz,yxtz\}$ which appears 6 times (type edge).
\item[-]$\{xyzt,xzyt,zxyt,zyxt,yzxt,yxzt\}$ which appears 4 times (type face).
\item[-]$\{xyzt,xytz,xtyz,xtzy,xzty,xzyt\}$ which appears 4 times (type vertex).
\end{itemize}
There is one 3-cell whose boundary is the union of all the faces.
\end{definition}
The polyhedral complex is best seen in Figure \ref{poly} as a tetrahedron whose edges and vertices are truncated. The ordering associated to a vertex has the form $xyzt$ where $x$ is the closest vertex, $xy$ is the closest edge and $xyz$ is the closest face. There are different ways of realizing this polyhedron in $\R^3$. Let us fix one of them once for all.

\begin{figure}
\begin{center}
\begin{pspicture}(-2,0)(3,3)
\includegraphics[width=5cm]{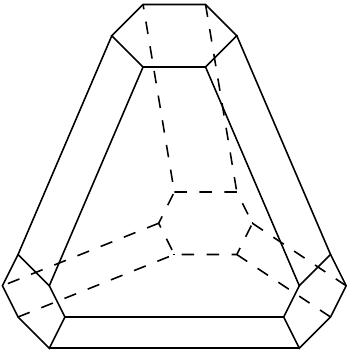}
\put(0.2,0){$x$}
\put(-5.5,0){$y$}
\put(-2.2,1.7){$z$}
\put(-2.5,5.5){$t$}
\put(-1.75,4.4){$\bullet$}
\put(-1.4,4.4){$txzy$}
\put(-5.05,0.85){$\bullet$}
\put(-6,0.85){$yztx$}
\end{pspicture}
\caption{The polyhedron $P(\Delta)$ \label{poly}}
\end{center}\end{figure}

\subsection{Configuration spaces and cocycles}\label{cocycle}

\begin{definition}
Let $\Delta$ be a simplicial tetrahedron. A {\it configuration} of $\Delta$ in $\C P^1$ is an injective map $\tau:\Delta\to \C P^1$.
\end{definition}

Composing by an element of $\PSL$, one gets an action of $\PSL$ on the set of configurations. Given a configuration $\tau:\Delta\to\C P^1$, we define a cross-ratio as a map $X_{\tau}: V(P(\Delta))\to \C\setminus\{0,1\}$ by the formula
$$X_{\tau}(xyzt)=\frac{(\tau_t-\tau_y)(\tau_z-\tau_x)}{(\tau_z-\tau_y)(\tau_t-\tau_x)}.$$

In this formula, we identified $\C P^1$ with $\C\cup\{\infty\}$. The map $X_{\tau}$ is a complicated (the 24 values of $X$ are determined by one of them) but invariant way for parametrizing configurations up to the action of $\PSL$.

Given a vertex $v=xyzt$ of $P(\Delta)$ and a configuration $\tau$, there is a unique configuration $\tau^v$ equivalent to $\tau$ via $\PSL$ action such that $\tau^v_x=\infty,\tau^v_y=0$ and $\tau^v_z=1$.
One can define a unique 1-cocycle $c\in Z^1(P(\Delta),\PSL)$ which satisfies for all edges $[v,v']$ of $P(\Delta)$ the following equality: $\tau^{v'}= c(v,v')\circ \tau^v$.

An easy computation gives the following formulas were we identified $\PSL$ with GL$(2,\C)/\C^*\id$.
$$c(xyzt,yxzt)=\mat{0&1\\ 1& 0},
\quad c(xyzt,xzyt)=\mat{1&-1\\ 0& -1}$$
$$c(xyzt,xytz)=\mat{1&0\\ 0& X_{\tau}(xyzt)}$$

\subsection{Orientation, branching and flattening}
\begin{definition}
An orientation of a simplicial tetrahedron $\Delta$ is a choice of ordering of its vertices up to even permutation.
\end{definition}
Notice that it is equivalent to the choice of an orientation of the realization of $\Delta$ or $P(\Delta)$.

\begin{definition}
A branching $b$ of a simplicial tetrahedron $\Delta$ is a choice of ordering of its vertices, or equivalently, a choice of vertex in $P(\Delta)$.
\end{definition}

Given a branching on $\Delta$, one can define an orientation of the edges of $P(\Delta)$ by orienting an edge from the vertex with lower ordering to the vertex with higher one with respect to the lexicographical ordering given by the branching. Of course a branching induces an orientation of $\Delta$, but both notions are introduced for independent purposes so we will ignore this coincidence.

The following definition and proposition are a variant of the constructions of \cite{neumann2}, Section 2. We give the details for completeness.
\begin{definition}
A flattening of a triple $(\Delta,b,\tau)$ where $\Delta$ is a simplicial tetrahedron, $b$ a branching on $\Delta$ and $\tau$ is a configuration of $\Delta$ in $\C P^1$ is a map $L:V(P(\Delta))\to \C$ such that the following relations are satisfied:
\begin{itemize}
\item[-] $L(xyzt)=-L(yxzt)=-L(xytz)$
\item[-] $L(xyzt)+L(xzty)+L(xtyz)=i\pi$ whenever one has $y<z<t$.
\item[-] $\exp(L(xyzt))=X_{\tau}(xyzt)$.
\end{itemize}
\end{definition}
The last equation shows that the data $L$ allows us to recover the map $X_{\tau}$. Hence, when dealing with flattenings, we will omit $\tau$ and $X$.

\begin{proposition}
The set $\boL(\Delta,b)$ of flattenings of a branched simplicial tetrahedron $(\Delta,b)$ is a Riemann surface. Given any vertex $xyzt$ of $P(\Delta)$ the map $L\mapsto \exp(L(xyzt))$ identifies $\boL(\Delta,b)$ with the universal abelian cover of $\C\setminus\{0,1\}$.
\end{proposition}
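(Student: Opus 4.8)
The plan is to show that a flattening is completely determined by the two numbers it assigns to the vertices $xyzt$ and $xzyt$, to identify the set of admissible pairs with an explicit smooth affine curve in $\C^2$, and to recognize the projection of this curve onto its first coordinate as the universal abelian cover of $\C\setminus\{0,1\}$.

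First I would fix the branching so that $x<y<z<t$ and, for a flattening $L$, introduce $w=\exp(L(xyzt))$, which lies in $\C\setminus\{0,1\}$ since $w=X_{\tau}(xyzt)$. A direct computation of cross-ratios gives $X_{\tau}(xzyt)=1-w$, $X_{\tau}(xzty)=\frac{1}{1-w}$ and $X_{\tau}(xtyz)=\frac{w-1}{w}$, and one checks $X_{\tau}(xyzt)X_{\tau}(xzty)X_{\tau}(xtyz)=-1$, exactly compatible with the constraint $L(xyzt)+L(xzty)+L(xtyz)=i\pi$. Writing $u=L(xyzt)$ and $v=L(xzyt)$ and using the sign rule $L(xzyt)=-L(xzty)$, the defining relations force $\exp(u)=w$ and $\exp(v)=1-w$, hence $\exp(u)+\exp(v)=1$. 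The heart of this step is the converse claim that the relations propagate the pair $(u,v)$ to a unique value at each of the $24$ vertices: the edges of type $E_1$ and $E_3$ flip the sign, while the behaviour along an $E_2$ edge is recovered by combining the ternary relation $L(xyzt)+L(xzty)+L(xtyz)=i\pi$ with the $E_3$ sign rule, the transposition and the $3$-cycle generating the full symmetric group on the last three letters; the $E_1$ sign rule then links the four blocks indexed by the first letter. I expect this propagation, and the verification that the numerous instances of the relations are mutually consistent, to be the main obstacle: one must check that transporting a value to a fixed vertex along different chains of relations yields the same result, i.e. that the additive constants $i\pi$ appearing in the ternary relation fit together coherently. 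This is a finite check, and is the single-tetrahedron shadow of Neumann's consistency arguments; it yields both the injectivity and the surjectivity of the map below.

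Granting this, the map $L\mapsto(L(xyzt),L(xzyt))$ identifies $\boL(\Delta,b)$ with
\[\widehat{U}=\{(u,v)\in\C^2 : \exp(u)+\exp(v)=1\}.\]
Since the differential of $f(u,v)=\exp(u)+\exp(v)-1$ is $\exp(u)\,\d u+\exp(v)\,\d v$, which never vanishes on $\widehat{U}$ (the two exponentials cannot vanish simultaneously), $\widehat{U}$ is a smooth one-dimensional complex submanifold of $\C^2$; transporting its complex structure through the bijection makes $\boL(\Delta,b)$ a Riemann surface.

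Finally I would analyze the projection $\pi:\widehat{U}\to U:=\C\setminus\{0,1\}$, $(u,v)\mapsto w=\exp(u)$, which is exactly the map $L\mapsto\exp(L(xyzt))$. Branches of the logarithm give local sections, so $\pi$ is a covering map; its fiber over $w$ is $(\log w+2i\pi\Z)\times(\log(1-w)+2i\pi\Z)\cong\Z^2$, and $\Z^2$ acts by the deck transformations $(u,v)\mapsto(u+2i\pi a,v+2i\pi b)$. A small loop around $0$ lifts to $(u,v)\mapsto(u+2i\pi,v)$ and a small loop around $1$ lifts to $(u,v)\mapsto(u,v+2i\pi)$, so the monodromy $\pi_1(U)\to\Z^2$ is the abelianization of the free group $\pi_1(U)$ onto $H_1(U)=\Z^2$. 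In particular it is surjective, so $\widehat{U}$ is connected and $\pi$ is precisely the covering associated with the commutator subgroup, that is, the universal abelian cover. Choosing another vertex of $P(\Delta)$ replaces $w$ by one of the six values $w,\,1-w,\,\tfrac1w,\dots$, each an automorphism of $U$ permuting $\{0,1,\infty\}$ and lifting to the universal abelian cover, so the conclusion holds for every vertex of $P(\Delta)$.
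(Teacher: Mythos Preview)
Your approach is essentially the paper's: reduce a flattening to two logarithms subject to one exponential relation, and recognize the resulting curve as the universal abelian cover of $\C\setminus\{0,1\}$. The only cosmetic difference is your choice of second coordinate $v=L(xzyt)=-L(xzty)$, which turns the paper's relation $\exp(l_2)=1/(1-\exp(l_1))$ into your $e^u+e^v=1$; conversely, the paper carries out explicitly the consistency check you defer (it writes the four ternary relations in the six unknowns $l_j,l'_j$ and finds the integers $k_j$), while you supply the monodromy argument for the covering that the paper leaves implicit.
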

\begin{proof}
Suppose one has $\Delta=\{x,y,z,t\}$ with branching $x<y<z<t$. Then the first relation shows that $L$ reduces to the data $l_1=L(xyzt),l_2=L(xzty),l_3=L(xtyz)$ and $l'_1=L(ztxy),l'_2=L(tyxz),l'_3=L(yzxt)$.
By the third relation, for any $j\in\{1,2,3\}$ one has $\exp(l_j)=\exp(l'_j)$ so there is $k_j\in\Z$ such that $l'_j=l_j+2i\pi k_j$.
The second relation gives the following equations:
$l_1+l_2+l_3=i\pi, -l_1-l'_3-l'_2=i\pi, l_2+l'_3+l'_1=i\pi,-l_3-l'_2-l'_1=i\pi$.
Using the variables $k_j$ these equations reduce to $k_3=k_1=0$ and $k_2=-1$.
Hence only $l_1$ and $l_2$ are independant though not completely because of the following remaining identity: $\exp(l_2)=1/(1-\exp(l_1))$. This last expression shows that the map sending $L$ to $\exp(l_1)$ identifies $\boL(\Delta,b)$ with the universal abelian cover of $\C\setminus\{0,1\}$. The same is true for the other variables.
\end{proof}

\section{The connection of a polyhedron}

Let us fix once for all a function $\phi:[0,1]\to [0,1]$ which is smooth, satisfies $\phi(0)=0$ and $\phi(1)=1$ and whose derivative is non-negative and has compact support in $(0,1)$.

In the realization of a polyhedral complex, any oriented edge $e$ comes with a natural parametrization with $[0,1]$. By $\d\phi_e$ we will denote the 1-form on $e$ obtained by derivating $\phi$ through the natural parametrization of the edge. We will often drop the
subscript when the edge we are dealing with and its orientation are clear.

\subsection{The 1-skeleton}
Given $(\Delta,L,b)$ one can define a connection $\alpha$ on the 1-squeleton  of $P(\Delta)$ by the following formulas where the orientations of the edges are given by the branching.
The connection takes its values on the Lie algebra $\bog$ of $\SL$.

\begin{itemize}
\item[-] $\alpha(xyzt,yxzt)=\mat{ 0&i\pi/2\\ i\pi/2&0}\d\phi$,
\item[-] $\alpha(xyzt,xzyt)=\mat{ i\pi/2&-i\pi/2\\ 0&-i\pi/2}\d\phi$,
\item[-] $\alpha(xyzt,xytz)=\mat{L(xyzt)/2&0\\ 0&-L(xyzt)/2}\d\phi$.
\end{itemize}

\begin{proposition}\label{lift}
The holonomy of $\alpha$ gives a lift of $c$ to $\SL$.
\end{proposition}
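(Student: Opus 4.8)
The plan is to reduce the statement to a local computation on each edge, followed by a compatibility (cocycle) check on the $2$-cells of $P(\Delta)$. On each edge the connection has the form $\alpha = A\,\d\phi$ for a constant matrix $A\in\bog$, so the holonomy equation $(\tfrac{\d}{\d t}\hol^{t})(\hol^{t})^{-1} = -A\,\phi'(t)$ integrates explicitly: since $A$ is constant along the edge and $\phi(0)=0$, $\phi(1)=1$, one gets $\hol = \exp(-A)$. Because $A$ is trace-free, $\det\exp(-A) = e^{-\tr A} = 1$, so the holonomy lands in $\SL$ automatically; the only genuine task is to check that $\exp(-A)$ reduces modulo the center to the prescribed matrix $c(v,v')$ for each of the three edge types.

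For the edges of type $E_1$ and $E_2$ one uses that $\mat{0&1\\1&0}^2 = \id$ and $\mat{1&-1\\0&-1}^2 = \id$, so the exponential of $\pm\tfrac{i\pi}{2}$ times these matrices is evaluated by the $\cos/\sin$ formula and equals $\mat{0&-i\\-i&0}$, respectively $\mat{-i&i\\0&i}$; each is the scalar $-i$ times the matrix defining $c$, hence projects to $c(v,v')$ in $\PSL$. For an edge of type $E_3$ the matrix $A$ is diagonal, so $\exp(-A) = \mat{e^{-L/2}&0\\0&e^{L/2}}$, which up to the scalar $e^{-L/2}$ equals $\mat{1&0\\0&e^{L}} = \mat{1&0\\0&X_{\tau}(xyzt)}$ by the flattening relation $\exp(L(xyzt)) = X_{\tau}(xyzt)$. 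This shows that every edge holonomy projects to the corresponding value of $c$, and in particular fixes the sign convention (it is $\exp(-A)$, not $\exp(+A)$, that is forced by the $E_3$ edges).

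It remains to see that these $\SL$-valued edge holonomies assemble into a genuine cocycle lifting $c$, i.e. that the ordered product of edge holonomies around the boundary of each $2$-cell is the identity. Since holonomy is multiplicative along composable edge-paths and $c$ is a $\PSL$-cocycle, each such product already projects to $\id$ and is therefore $\pm\id$ in $\SL$; the content of the proposition is that the sign is always $+$. This is a finite check over the three types of $2$-cells. The type-edge and type-face cells involve only $E_1$ and $E_2$ edges and are settled by directly multiplying the explicit matrices above. The delicate case, which I expect to be the main obstacle, is the type-vertex hexagon $\{xyzt,xytz,xtyz,xtzy,xzty,xzyt\}$: its boundary alternates three $E_3$ edges, carrying scalar factors $e^{\pm L/2}$ governed by the values $L(xyzt), L(xtyz), L(xzty)$, with three $E_2$ edges carrying factors of $i$. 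One must show that these scalar prefactors multiply to $1$, and this is governed precisely by the flattening relation $L(xyzt)+L(xzty)+L(xtyz)=i\pi$ together with the antisymmetry $L(xyzt)=-L(yxzt)=-L(xytz)$ (the latter accounting for the edges traversed against their branching orientation). The normalization $i\pi$ on the right-hand side, rather than $0$ or $2i\pi$, is exactly what forces the product to be $+\id$ rather than $-\id$, and carefully tracking these signs through the matrix product is the crux of the argument.
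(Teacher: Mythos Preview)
Your approach is essentially the paper's: compute the edge holonomies $M_1,M_2,M_3(L)$ as exponentials, observe they project to $c$, and then verify the cocycle relation on each of the three face types. The edge computations are correct and match the paper verbatim.

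There is, however, a concrete slip in your face-by-face analysis. You write that ``the type-edge and type-face cells involve only $E_1$ and $E_2$ edges.'' This is false for the type-edge cell $\{xyzt,yxzt,xytz,yxtz\}$: its four sides are two $E_1$ edges and two $E_3$ edges (the paper makes this explicit in the 2-skeleton description, identifying the square with $[0,1]^2$ so that the $s$-direction is $E_1$ and the $t$-direction is $E_3$). Consequently the type-edge relation does involve $L$, and the check is precisely $M_1 M_3(L(xyzt)) = M_3(L(yxzt)) M_1$, which holds because of the antisymmetry $L(yxzt)=-L(xyzt)$. Your later remark that the antisymmetry ``accounts for edges traversed against their branching orientation'' in the vertex hexagon is therefore misplaced: the antisymmetry is what closes the type-edge square, while the vertex hexagon is governed by the sum relation $L(xyzt)+L(xzty)+L(xtyz)=i\pi$ together with the three $M_2$ factors. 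Once you correct this attribution, your argument coincides with the paper's.
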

\begin{proof}
The holonomies along the three corresponding type of edges are $M_1=\mat{ 0&-i \\ -i&0}$, $M_2=\mat{-i&i\\0&i}$ and
$M_3(L)=\mat{\exp(-L/2)& 0\\ 0 &\exp(L/2)}$ for $L=L(xyzt)$. These matrices are proportionnal to the corresponding values of the cocycle $c$.

Next, each face of $P(\Delta)$ gives an equation which should be satisfied.
The equation of type edge is equivalent to $M_1M_3(L(xyzt))=M_3(L(yxzt))M_1$. The equation of type face is equivalent to $M_1M_2 M_1=M_2M_1M_2$ and the equation of type vertex is equivalent to $$M_2M_3(L(xzyt))M_2=M_3(L(xtyz))M_2M_3(L(xyzt)).$$
 This last equation is a consequence of the relations satisfied by $X$ and $L$.
\end{proof}

\subsection{The 2-skeleton}

One need to explicit the restriction of the connection to the 2-cells of $P(\Delta)$.

{\bf Type edge:} let us identify the corresponding 2-cell with $[0,1]\times [0,1]$ such that the edges $[0,1]\times \{0,1\}$ have type $E_1$ and the edges $\{0,1\}\times [0,1]$ have type  $E_3$.
We denote by $s$ and $t$ the corresponding coordinates and suppose that the sides of the square are oriented in the direction of increasing $s$ and $t$. We set
\begin{equation}\label{edge}
\alpha=\mat{ 0&i\pi/2\\ i\pi/2&0}\d\phi_s+\frac{l}{2}\mat{\cos(\phi_s\pi)&i\sin(\phi_s\pi)\\-i\sin(\phi_s\pi)&-\cos(\phi_s\pi)}\d\phi_t.
\end{equation}
where $l$ is the value of $L$ at the vertex $(0,0)$.

{\bf Type face:} we fix once for all a smooth connection $\alpha$ on this 2-cell with the property that for any edge $e$ there is a standard neighborhood $U_e$ of the edge with projection $\pi_e$ on that edge such that $\alpha$ restricted to $U_e$ is equal to $\pi_e^*\alpha|_e$. This condition will ensure the smoothness of the total connection. The important point is that we will use the same connection for all type face 2-cells.

{\bf Type vertex:} this is the most difficult part as it seems that no preferred choice can be done without extra data. We remark at least that the connection on the boundary lies in the Borel subalgebra $\bob=\mat{*&*\\0&*}$. We extend the connection in an arbitrary way with the conditions that it takes its values in $\bob$ and that the same condition as for the type face cells holds in the neighborhood of the boundary.

It remains to fill the connection inside $P(\Delta)$. One can certainly do it as we have $\pi_2(\SL)=1$ so we fill it in an arbitrary way such that the connection is constant in some neighborhood of each face and we denote by $\alpha$ the flat connection thus obtained. Up to some controlled ambiguity, it depends only on $L$ and $b$.

\subsection{The CS-invariant and its derivative}\label{deriv}

Let $(\Delta,L,b,o)$ be a simplicial tetrahedron with flattening, branching and orientation. Let $\alpha$ be the flat connection on $P(\Delta)$ associated to $L$ and $b$.
One can compute its Chern-Simons invariant with respect to the orientation of $P(\Delta)$: we denote it by $CS(\Delta,L,b,o)=CS(\alpha)$.

\begin{proposition}
Let $(\Delta,L,b,o)$ be a 4-tuple as above. Then the function $CS: \boL(\Delta,b)\to \C/\Z$ is well-defined.
\end{proposition}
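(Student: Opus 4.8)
The plan is to show that the value $CS(\Delta,L,b,o)$, defined as $CS(\alpha)$ for the flat connection $\alpha$ associated to the flattening data, is independent modulo $\Z$ of the many arbitrary choices made in constructing $\alpha$ (the type-face connection, the type-vertex extension, the interior filling), and moreover depends on $L$ only through its class in $\boL(\Delta,b)$, so that it descends to a genuine function on the Riemann surface $\boL(\Delta,b)$.

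First I would observe that the boundary data of $\alpha$ is not arbitrary: on the full 1-skeleton and on the type-edge 2-cells the connection is rigidly prescribed by $L$ and $b$, and on the type-vertex 2-cells it is constrained to take values in $\bob$. Thus the restriction $\alpha|_{\partial P(\Delta)}$ is fixed once we impose that the vertex cells have Borel-valued connections, and in particular the corresponding boundary class in $\boR(P(\Delta))$ is determined. The key structural tool is then Proposition~\ref{equivalence} together with the preceding boundary-condition analysis: two interior fillings of the \emph{same} boundary data differ by a gauge transformation $g\in\map_{\partial}(P(\Delta),G)$, and the proof that $CS$ is well-defined on $\boR$ shows precisely that $c(\alpha,g)\in\Z$ for such $g$. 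Hence any two admissible interior fillings, and any two type-vertex extensions agreeing with the Borel condition, give values of $CS(\alpha)$ differing by an integer. The uniform choice of a single type-face connection for all tetrahedra is what makes these comparisons legitimate simultaneously.

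The second and more delicate point is the dependence on $L$ itself. Since $\boL(\Delta,b)$ is a covering of $\C\setminus\{0,1\}$, moving along a nontrivial loop in this cover changes $L$ by adding $2i\pi k_j$ to the coordinates in a way consistent with all the flattening relations, while leaving the cross-ratios $X_\tau=\exp(L(\cdot))$ fixed. I would use the derivative formula \eqref{derive}: along any path in $\boL(\Delta,b)$ the variation $b=\dot\alpha$ is a flat variation, so $\frac{d}{ds}CS(\alpha)=\frac12\int_{\partial P(\Delta)}\lg\alpha\wedge b\ld$, an integral supported on the boundary where $\alpha$ and $b$ are explicitly given by the type-edge formula \eqref{edge} and the $\bob$-valued vertex cells. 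One must check that integrating this boundary term around a closed loop in the cover produces an integer, which is where the normalization $\lg A,B\ld=-\frac{1}{4\pi^2}\tr(AB)$ and the factor $i\pi/2$ in $\alpha$ conspire to give the right periods.

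The main obstacle I expect is exactly this period computation on the type-vertex and type-edge cells: one must verify that the holonomy of $\boL(\Delta,b)\to\C\setminus\{0,1\}$ contributes to $CS$ only through $2i\pi\Z$-shifts that land in $\Z$ after the $e^{2i\pi(\cdot)}$ identification defining $\boL_\Sigma$. Concretely, the diagonal entries $L/2$ of $\alpha$ along the $E_3$-type edges mean that shifting $L$ by $2i\pi$ corresponds to a loop in $PB\simeq S^1$, and I would reduce the period to a Wess--Zumino-type integral over such a loop, showing it is integral by the same mechanism (triviality of $\chi$ on $\bob$ and Stokes for the product $\mu(\alpha)\wedge\mu(g^{-1}\d g)$) used in the proof that $CS:\boR(M)\to\C/\Z$ is well-defined. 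Assembling these, $CS(\Delta,L,b,o)$ is well-defined as a map $\boL(\Delta,b)\to\C/\Z$.
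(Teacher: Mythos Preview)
Your second half attacks a non-problem and in fact would prove something false. A point $L\in\boL(\Delta,b)$ \emph{is} a specific flattening; there is no further equivalence relation on $\boL(\Delta,b)$ to quotient by, and different points of the cover lying over the same cross-ratio are genuinely different inputs. Indeed the derivative formula computed just after this proposition shows that $CS$ varies nontrivially as $L$ moves along the fibre of the covering map: shifting $l_1$ by $2i\pi$ while keeping $l_2$ fixed changes $CS$ by $\frac{i}{4\pi}l_2$, which is not an integer in general. So the ``period computation'' you propose would fail, and correctly so, because the proposition does not claim descent to $\C\setminus\{0,1\}$. What must be shown is only that, for a \emph{fixed} $L$, the value $CS(\alpha)\in\C/\Z$ is independent of the auxiliary choices (interior filling, type-vertex extension, type-face connection) entering the construction of $\alpha$.

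Your first half is closer but still incomplete. Invoking the $\boR(M)$ machinery wholesale is not quite licit: the boundary of $P(\Delta)$ carries a $\bob$-valued connection only on the type-vertex cells, not on the type-edge or type-face cells, so $\alpha\notin\Omega^1_\partial(P(\Delta),\bog)$ and Proposition~\ref{equivalence} does not apply directly. The paper instead treats the three choices separately: the interior filling varies by a gauge transformation equal to the identity on $\partial P(\Delta)$, so only the $W(g)$ term survives and is an integer; a change of type-vertex extension is handled by a $B$-valued gauge element, and one uses that $\chi|_\bob=0$ for dimensional reasons to kill $W(g)$; finally, changing the fixed type-face connection produces four boundary contributions, one per face, which cancel in pairs by orientation. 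You omit this last cancellation argument entirely.
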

\begin{proof}
We need to show that $CS(\alpha)$ do not depend on the choices we made for constructing $\alpha$. The last choice we made was the filling of the interior of $P(\Delta)$.
As $P(\Delta)$ is simply connected, any other choice must be gauge equivalent with a gauge element which equals $\id$ at the boundary. The formula \eqref{cs} shows that the difference term is an integer.
Then, if we change the connection on faces of type vertex, the new connection is again gauge equivalent with a gauge element $g$ with values in $B$. For dimensional reasons, the restriction of $\chi$ to $B$ vanishes. Hence, the quantity $W(g)$ which expresses the difference also vanishes.
Finally, changing the connection on the faces of type face gives a difference with 4 contributions, one for each face. But these contributions have the same absolute values and signs depending on the relative orientations. Hence, the total difference also vanishes and the proposition is proved.\end{proof}

The aim of this section is to compute the derivative of $CS(\Delta,L,b,o)$ with respect to the geometric data $L$ by applying Formula \eqref{derive}.

Let us consider a 4-tuple $(\Delta,L,b,o)$. A tangent vector for the geometric parameter $L$ is a map $\delta L: V(P(\Delta))\to\C$ satisfying the two set of equations for all orderings $xyzt$ of $\Delta$.
\begin{itemize}
\item[-] $\delta L(xyzt)=\delta L(yxtz)=-\delta L(xytz)=-\delta L(yxzt)=\delta L(ztxy)$
\item[-] $\delta L(xyzt)+\delta L(xzty)+\delta L(xtyz)=0$ if $y<z<t$.
\end{itemize}
\begin{proposition}
One has $\delta CS(\Delta,L,b,o)=\frac{1}{8\pi^2}\left(l_2\delta l_1-l_1\delta l_2-i\pi \delta l_1\right)$ where $l_1=L(xyzt)$, $l_2=L(xzty)$ and $x<y<z<t$.
\end{proposition}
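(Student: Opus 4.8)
The plan is to read off $\delta CS$ from the boundary formula \eqref{derive}, which gives $\delta CS=\frac{1}{2}\int_{\partial P(\Delta)}\lg\alpha\wedge\delta\alpha\ld$, where $\delta\alpha$ denotes the variation of the connection produced by $\delta L$. I would organize this integral as a sum over the three types of $2$-cells and show first that only the type vertex cells contribute.

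Indeed, the type face cells carry an $L$-independent connection — their boundary edges are of type $E_1$ and $E_2$, on which $\alpha$ does not involve $L$ — so $\delta\alpha=0$ and they drop out. For the type edge cells I would use \eqref{edge}: writing $\alpha=A\,\d\phi_s+\tfrac{l}{2}M(\phi_s)\,\d\phi_t$ with $A$ constant and $M(\phi_s)$ independent of $L$, the variation is $\delta\alpha=\tfrac{\delta l}{2}M(\phi_s)\,\d\phi_t$, so the only term of $\lg\alpha\wedge\delta\alpha\ld$ not killed by $\d\phi_t\wedge\d\phi_t=0$ is proportional to $\tr\bigl(A\,M(\phi_s)\bigr)$, which vanishes identically. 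Hence these cells contribute nothing either.

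Everything therefore lives on the four type vertex cells, where $\alpha$ and $\delta\alpha$ take values in $\bob$. The key simplification I would use is that for $\bob$-valued forms $\lg\alpha\wedge\delta\alpha\ld=-\frac{1}{2\pi^2}\mu(\alpha)\wedge\mu(\delta\alpha)$, because the trace of a product of upper-triangular matrices only sees the diagonal. Flatness and its linearization force $\mu(\alpha)$ and $\mu(\delta\alpha)$ to be closed $1$-forms, since the bracket of two Borel elements is strictly upper-triangular and hence killed by $\mu$. On each cell, which is a disk, I would pick a primitive $f$ of $\mu(\alpha)$ and integrate by parts, $\int_{D}\mu(\alpha)\wedge\mu(\delta\alpha)=\int_{\partial D}f\,\mu(\delta\alpha)$; this confines the whole computation to the boundary hexagon and shows that the arbitrary interior filling of the type vertex connection is irrelevant.

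It then remains to evaluate the hexagonal integrals. Around each hexagon the edges alternate between type $E_2$, where $\mu(\alpha)=\pm\tfrac{i\pi}{2}\d\phi$ and $\mu(\delta\alpha)=0$, and type $E_3$, where $\mu(\alpha)=\pm\tfrac{l_j}{2}\d\phi$ and $\mu(\delta\alpha)=\pm\tfrac{\delta l_j}{2}\d\phi$; only the three $E_3$ edges feed $\mu(\delta\alpha)$, while $f$ accumulates the constants $\pm\tfrac{i\pi}{2}$ and $\pm\tfrac{l_j}{2}$ along the way. Each $E_3$ edge contributes a term of the form $n_i f_{\mathrm{start}}+\tfrac12 m_i n_i$ independent of the shape of $\phi$, and the relation $l_1+l_2+l_3=i\pi$ guarantees that $f$ is single-valued. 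Summing the four cells and rewriting every $L$-value in terms of $l_1,l_2,l_3$ through the flattening relations (antisymmetry in the first and in the last pair, together with $L(abcd)=L(badc)=L(cdab)=L(dcba)$) should collapse the result to $\frac{1}{8\pi^2}(l_2\delta l_1-l_1\delta l_2-i\pi\delta l_1)$. The hard part will be precisely this final bookkeeping: the four hexagons are not symmetric, the edge orientations are dictated by the branching and the traversal sense of each hexagon by the orientation $o$, and the numerous $i\pi$-contributions cancel only if every sign is tracked correctly.
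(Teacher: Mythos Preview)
Your proposal is correct and follows essentially the same route as the paper: use \eqref{derive} to reduce to a boundary integral, kill the type face and type edge cells exactly as you do, and on each type vertex hexagon exploit that everything is Borel-valued so that only the diagonal part $\mu$ survives and is closed, allowing a Stokes reduction to the hexagon boundary. The only cosmetic difference is that the paper chooses a primitive of $\mu(\delta\alpha)$ rather than of $\mu(\alpha)$, so its boundary integrand is $A\cdot a$ instead of your $f\cdot\mu(\delta\alpha)$; either choice leads to the same per-cell contribution $\tfrac{1}{32\pi^2}\bigl(\delta l_3(l_1+i\pi)-\delta l_1(l_3+i\pi)\bigr)$ and, after summing the four vertices, to the stated formula.
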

\begin{proof}
Given such a tangent vector, one can compute the corresponding tangent flat connection $\delta\alpha$. It vanishes identically on type face 2-cells as the connection on these cells do not depend on the geometric data.
Let $S$ be a type edge 2-cell, with parameter $s$ for the type E1 edge and parameter $t$ for the type E3 edge and with $l$ being the value of the $L$ map at the lower vertex $(0,0)$. The formula \eqref{edge} holds, and we deduce from it the formula
$$\delta\alpha=\frac{\delta l}{2}\mat{\cos(\phi_s\pi)&i\sin(\phi_s\pi)\\-i\sin(\phi_s\pi)&-\cos(\phi_s\pi)} \d \phi_t.$$
One computes that $\alpha\wedge\delta\alpha$ is trace free, hence $\lg\alpha\wedge\delta\alpha\ld$ vanishes and these 2-cell do not contribute either.

Finally, the remaining contributions come from the type vertex cells. Suppose that the branching and the orientation of $\Delta$ come from the ordering $x<y<z<t$ and let $S$ be the cell corresponding to the vertex $x$.

Let $a$ be the unique 1-form on $S$ such that $\alpha=\frac{1}{2}\mat{a&*\\ 0&-a}$: as $\alpha$ is flat, $a$ is closed. Let $\delta a$ be the 1-form corresponding to $\delta \alpha$, we choose a primitive of $\delta a$ that we denote by $A$ . Then, one computes

$$\frac{1}{2}\int_S\lg \alpha\wedge\delta\alpha\ld=\frac{1}{16\pi^2}\int_S\delta a\wedge a=\frac{1}{16\pi^2}\int_{\partial S}A\wedge a.$$

A computation shows that the final contribution of the cell is
$\frac{1}{32\pi^2}\delta l_3 (l_1+i\pi)-\frac{1}{32\pi^2}\delta l_1 (l_3+i\pi)$ where we have set $l_1=L(xyzt)$, $l_2=L(xzty)$ and $l_3=L(xtyz)$.
Summing up all vertices we obtain the formula of the proposition.
\end{proof}

To explain the relation with the standard dilogarithm, let us consider the case where
$\Delta=\{x,y,z,t\}$ with $x<y<z<t$, $\tau_x=\infty,\tau_y=0,\tau_z=u,\tau_t=1$ with $u\in (0,1)$.

Then one computes $X(xyzt)=1/u$, $X(xzty)=-u/(1-u)$ and $X(xtyz)=1-u$. Let us associate to such a configuration the flattening given by $l_1=-\log(u), l_2=\log(u)-\log(1-u)+i\pi, l_3=\log(1-u)$.

The derivative of $H(u)=CS(\Delta,L(u),b,o)$ with respect to $u$ is equal to $\frac{1}{8\pi^2}(\frac{\log(1-u)}{u}+\frac{\log(u)}{1-u})$. Recall that the Rogers dilogarithm is defined by $R(z)=\frac{1}{2}\log(z)\log(1-z)-\int_0^z\frac{\log(1-t)}{t} dt.$ We have $d H=\frac{-1}{4\pi^2}d R$, hence there exists $C$ such that $H(u)=C-\frac{R(u)}{4\pi^2}$. The determination of the constant will be a consequence of the 5-term equation of section \ref{comput}.

\section{Global data}
\subsection{Triangulations and their subdivisions}
\subsubsection{Abstract triangulations}
Let $(\Delta_i,o_i)_{i\in I}$ be a finite family of oriented simplicial tetrahedra. An orientation $o$ of a set $X$ is a numbering of the elements of $X$ up to even permutation. Any face of $\Delta$ gets an orientation by the convention that a positive numbering of the face followed by the remaining vertex is a positive orientation of $\Delta$.

We will call {\it abstract triangulation} a pair $T=((\Delta_i,o_i)_{i\in I},\Phi)$ where $\Phi$ is a matching of the faces of the $\Delta_i$'s reversing the orientation. If we realize this gluing with actual tetrahedra, then the resulting space $S(T)=\bigcup_{i}\Delta_i/\Phi$ may have cone singularities at vertices. To solve this problem, we truncate the triangulation. We do it in two stages: the first one will consists in truncating vertices, and the second one in truncating vertices and edges.

\subsubsection{Truncated triangulations}
For any simplicial tetrahedron $\Delta$, we define $\tru(\Delta)$ as the polyhedron whose vertices are pairs $(x,y)$ of distinct vertices of $\Delta$. As usual, we will use the more compact notation $xy$ for $(x,y)$. There are two type of edges, one has the form $(xy,yx)$ for distinct $x$ and $y$ and one has the form $(xy,xz)$ for distinct $x,y,z$. Then we add triangular cells of the form $(xy,xz,xt)$ and hexagonal cells of the form $(xy,yx,yz,zy,zx,xz)$. Finally we add one 3-cell: we may think of $\tru(\Delta)$ as the tetrahedron $\Delta$ truncated around vertices.

If $T=((\Delta_i,o_i)_{i\in I},\Phi)$, we define $\tru(T)=\bigcup_{i\in I} \tru(\Delta_i)$ where we identified the hexagonal faces following the indications of $\Phi$. All remaining faces are triangles which give a triangulation of the boundary of $\tru(T)$.
We will call {\it triangulation} of a compact oriented 3-manifold $M$ with boundary an abstract triangulation $T$ and an oriented homeomorphism $h:\tru(T)\to M$. This type of triangulation is often called a truncated ideal topological triangulation.

\subsubsection{Polyhedral triangulations}
Let $E(T)$ be the set of edges of $S(T)$: for any $e\in E(T)$, let $v(e)$ the set of pairs $(\Delta,\tilde{e})$ where $\tilde{e}$ is an edge of $\Delta$ projecting to $e$. We connect two points in $v(e)$ if the two corresponding simplicial tetrahedra are adjacent in $e$. Then $v(e)$ is a simplicial circle called the star of the edge $e$. We denote by $P(e)$ the polyhedra $D\times [0,1]$ such that $D$ is a regular polygon whose boundary is identified with $v(e)$.

We define the polyhedral subdivision of an abstract triangulation as the union $$P(T)=\left(\bigcup_{i\in I} P(\Delta_i)\cup\bigcup_{e\in E(T)}P(e)\right)/\Phi.$$ The 2-cells of type face of $P(\Delta_i)$ are identified together as prescribed by $\Phi$ and the 2-cells of type edge are glued to the 2-cells of the corresponding polyhedron $P(e)$. The 2-cells of type vertex are not glued. Hence, the realization of $P(T)$ is a manifold whose boundary has a cell decomposition with hexagons coming from type vertex 2-cells and polygons coming from $P(e)$.

\subsection{Branchings and flattenings}

Let $T=((\Delta_i,o_i)_{i\in I},\Phi)$ be an abstract triangulation. We will denote by $S(T)$, $\tru(T)$ and $P(T)$ and call singular, truncated and polyhedral triangulations the different topological realizations of the triangulation $T$. We remark that the set of vertices of $P(T)$ is in bijection with the set of ordered simplicial tetrahedra of $T$. We will denote loosely by $xyzt$ an ordered simplicial tetrahedron in $T$, and hence a vertex of $P(T)$.

{\bf Branching:}
A branching $b$ is an orientation of the edges of $S(T)$ such that the restriction to any simplex $\Delta$ in $T$ of the orientation of the edges is induced by an ordering of $\Delta$. This means than one can find a unique ordering on the vertices of $\Delta$ such that all the edges of $\Delta$ are oriented from the lower vertex to the higher one. This does not imply that there is a global ordering of the vertices of  $S(T)$.

{\bf Cross-ratio:}
A cross-ratio structure $X$ on $T$ is a map from the set of vertices of $P(T)$ to $\C\setminus\{0,1\}$ such that the following relations hold:
\begin{itemize}
\item[-] For all vertices of the form $xyzt$ one has
$$X(xyzt)=X(ztxy)=X(yxzt)^{-1}=X(xytz)^{-1}.$$
\item[-] In the same settings, one has $X(xzty)=1/(1-X(xyzt))$.
\item[-] For any oriented edge $e$ of $S(T)$ the product $\prod_i X(xyz_i z_{i+1})$ is equal to 1 where the simplices $xyz_iz_{i+1}$ involved in the product describe the set $v(e)$ where the oriented edge $xy$ projects to $e$ and the cyclic ordering of the points $z_i$ corresponds to the simplicial structure of $v(e)$.
\end{itemize}

{\bf Flattening:}
A flattening associated to a cross-ratio structure $X$ and branching $b$ on $T$ is a map $L$ from the set of vertices of $P(T)$ to $\C$ satisfying the following relations:
\begin{itemize} 
\item[-] For all vertices of the form $xyzt$ one has $L(xyzt)=-L(yxzt)=-L(xytz)$.
\item[-] Whenever one has $y<z<t$, the relation $L(xyzt)+L(xzyt)+L(xytz)=i\pi$ holds.
\item[-] In the same settings, one has $\exp(L(xyzt))=X(xyzt)$.
\item[-] For any edge $e$ of $S(T)$ the sum $\sum_i L(xyz_i z_{i+1})$ is equal to 0 where the simplices involved in the sum are the same that for the cross-ratios.
\end{itemize}

\subsection{Geometric meaning of cross-ratios and flattenings}

An abstract triangulation $T$ with cross-ratio structure $X$ induces a cocycle $c$ on the 1-skeleton of $P(T)$ with values in $\PSL$. This cocycle is given on $P(\Delta)$ by the formulas of the section \ref{cocycle} and no extra data needs to be defined as no edges have been added when passing from $\bigcup_i P(\Delta_i)$ to $P(T)$. The cocycle relations for all subpolyhedra $P(\Delta_i)$ are verified, and the relations coming from $P(e)$ for all edges $e$ are a consequence of the last equation of the definition of the cross-ratio structure.

Hence, if we set $M=P(T)$ for simplicity and choose base points $x_i$ on each connected component $\Sigma_i$ of the boundary of $M$, then a cross-ratio structure $X$ on $T$ produces a groupo\"id homomorphism $c:\Pi_1(M)\to G$ such that $c$ restricted to $\Sigma_i$ takes its values in $PB$.

This interpretation extends to the flattening $L$. Suppose there is a branching $b$ on $T$ and a flattening $L$ associated to $X$. Then we see as a consequence of the proposition \ref{lift} and of the last formula in the definition of flattening that the preceding cocycle $c$ and hence the corresponding groupo\"id homomorphism lifts to $\SL$.
Given a simplicial path $\gamma$ in $\Sigma_i$, one defines $L(\gamma)$ as the sum of the flattenings of the edges along the path. Then the pair $(c,L)$ is an element of $\boR(M)$. Hence, according to our geometric interpretation, cross-ratios and flattenings
on a triangulation $T$ are precisely the combinatorial data we need to define an element of $\boR(M)$ where $M$ is triangulated by $T$.

We would like to know which elements of $\boR(M)$ are obtained in this way: to this end we propose the following definition. Let $M$ be a 3-manifold with boundary and $(P,s,l)$ an element of $\boR(M)$.

\begin{definition}
Let $\gamma$ be an arc in $M$ whose ends lie in the boundary of $M$.
We will say that $\gamma$ is regular relatively to $(P,s,l)$ if the holonomy of the flat bundle $P\times_G\C P^1$ along $\gamma$ sends the section $s$ over the source point to an element distinct from the value of $s$ over the target point.
\end{definition}

Suppose that $M$ is triangulated, meaning that there exists an abstract triangulation $T$ and a homeomorphism $h:\tru(T)\to M$. Fix a branching on $T$ and let $(X,L)$ be a cross-ratio and flattening on $T$. As $P(T)$ and $\tru(T)$ are homeomorphic, the cocycle $c$ associated to $X$ gives a representation of $\Pi_1(M)$ into $\PSL$, and we set $\phi(X,L)=(c,L)\in \boR(M)$. We have the following proposition:

\begin{proposition}
For all elements $(P,s,l)\in\boR(M)$ such that all edges of $\tru(T)$ which do not lie on the boundary are regular, there is a pair $(X,L)$ such that $(P,s,l)=\phi(X,L)$.
\end{proposition}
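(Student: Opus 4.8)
The plan is to run the constructions of Sections~\ref{cocycle} and~\ref{lift} backwards: starting from a representative flat connection $\alpha$ for the class $(P,s,l)$, I would develop the flat section $s$ to recover a configuration on each tetrahedron, read off the cross-ratio structure $X$, and then lift $X$ to a flattening $L$ using the prescribed boundary logarithm $l$. Throughout I identify $M$ with $\tru(T)$ via $h$ and work in the universal cover.

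First I would produce $X$. Since $P$ is trivializable and flat, the associated bundle $P\times_G\C P^1$ is flat over all of $M$, and the flat section $s$ over $\partial M$ assigns to each truncation triangle of $\tru(T)$ a well-defined point of $\C P^1$, because $s$ is parallel and each truncation triangle is simply connected. Fixing a trivialization of $P$ over a lift of a tetrahedron $\tru(\Delta_i)$ and parallel-transporting these four boundary values into a common fibre yields four points $\tau_x,\tau_y,\tau_z,\tau_t\in\C P^1$. The hypothesis that every edge of $\tru(T)$ not lying on the boundary is regular says exactly that the two endpoints of such an edge develop to distinct points; hence the four points of each tetrahedron are pairwise distinct, they define a configuration $\tau_i\colon\Delta_i\to\C P^1$, and I set $X(xyzt)$ to be the cross-ratio of $\tau_i$ as in Section~\ref{cocycle}.

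Next I would verify that $X$ is a cross-ratio structure. The symmetry relations $X(xyzt)=X(ztxy)=X(yxzt)^{-1}=X(xytz)^{-1}$ and $X(xzty)=1/(1-X(xyzt))$ are formal identities for cross-ratios and need no argument. The substantive point is the edge-product relation. Fixing an interior edge $e$ and normalising its two developed endpoints to $\infty$ and $0$, each factor $X(xyz_iz_{i+1})$ around the star $v(e)$ reduces to the ratio $\tau_{z_{i+1}}/\tau_{z_i}$, so the product telescopes to the multiplier of the holonomy of a meridian loop linking $e$. Since the star $v(e)$ bounds the disk $D$ in the polyhedron $P(e)=D\times[0,1]$, this meridian is contractible in $M$, its holonomy is trivial, and the product equals $1$. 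The same normalisation shows that the cocycle $c$ attached to $X$ by the formulas of Section~\ref{cocycle} has the same holonomy as $\alpha$, so $c$ recovers the representation underlying the $(P,s)$-part of our class.

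Finally I would construct the flattening $L$. Within each tetrahedron, any lifts $l_1\in\log X(xyzt)$, $l_2\in\log X(xzty)$ together with $l_3:=i\pi-l_1-l_2$ define a flattening, since $X(xyzt)X(xzty)X(xtyz)=-1$ forces $\exp(l_3)=X(xtyz)$. With the edge-product relation in hand, every interior edge-sum $\sum_iL(xyz_iz_{i+1})$ lies in $2i\pi\Z$, and the gap between each boundary sum $L(\gamma)$ and the prescribed $l(\gamma)$ lies in $2i\pi\Z$ as well. The remaining task is to adjust the integer parts of the $l_1,l_2$ over all tetrahedra so that all interior edge-sums vanish exactly and all boundary sums reproduce $l$. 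I expect this to be the \emph{main obstacle}: it is an affine problem over $\Z$, the combinatorial shadow of the existence of a flattening in Neumann's sense. I would resolve it geometrically, by developing not merely the section but a continuous branch of the logarithms of the edge-vectors $\tau_j-\tau_k$; the additivity $l(\gamma\delta)=l(\gamma)+l(\delta)$ of the given datum, together with the contractibility of each interior edge link used above, forces such logarithms to develop single-valuedly, so the interior edge-sums vanish on the nose and the boundary sums equal $l$. Checking that this logarithmic development is consistent across all faces identified by $\Phi$ is the delicate point that this argument must confront.
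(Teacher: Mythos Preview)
Your construction of the cross-ratio structure $X$ is essentially the same as the paper's: develop the flat section $s$ to the four corner triangles of each $\tru(\Delta_i)$, take cross-ratios, and verify the edge-product relation by telescoping around the star of each interior edge. That part is fine.

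The gap is in the construction of the flattening. You first choose arbitrary logarithms $l_1,l_2$ tetrahedron by tetrahedron and then propose to correct their integer parts so that all interior edge-sums vanish and the boundary sums recover $l$. As you yourself note, this is a global integral affine problem, and you do not actually solve it: your suggestion to ``develop a continuous branch of the logarithms of the edge-vectors $\tau_j-\tau_k$'' is precisely the step whose consistency across the face identifications $\Phi$ you leave open. In other words, you have identified the obstacle but not cleared it.

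The paper avoids this difficulty by never making per-tetrahedron choices. It fixes a representative connection $\alpha\in\Omega^1_\partial(M,\bog)$ for $(P,s,l)$ and, for each interior edge $e$ of $\tru(T)$ with underlying path $\gamma$, sets
\[
l_e \;=\; \text{a chosen logarithm of }\det\bigl((\hol_\gamma\alpha)(1,0),\,(1,0)\bigr),
\]
with the convention $l_{-e}=l_e-i\pi$ coming from the branching. For boundary edges one uses the datum $l=-\int\mu(\alpha)$ directly. Since $l_e$ is attached to the global edge $e$ rather than to a particular tetrahedron containing it, there is no face-matching consistency to check. One then defines, for a vertex $xyzt$ of $P(T)$,
\[
L(xyzt)\;=\;l_{\gamma_{xz}}+l_{\gamma_{yt}}-l_{\gamma_{xt}}-l_{\gamma_{yz}},
\]
where each $\gamma_{uv}$ is a simplicial path in $\tru(\{x,y,z,t\})$ between the corresponding corner triangles and $l_{\gamma}$ is the sum of the edge contributions along it. The flattening axioms and the agreement with the prescribed boundary logarithm then follow (the paper cites \cite{dz} for the verification). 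The point to take away is that making the logarithmic choice once per \emph{edge of the global complex}, using the globally defined holonomy of $\alpha$, is what turns your acknowledged ``delicate point'' into a non-issue.
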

\begin{remark}
This proposition is a variant of fairly well-known arguments, but we include it for completeness. The reader can refer to \cite{kashaev} for the notion of regularity and to \cite{dz} for the construction of flattenings.
\end{remark}
\begin{proof}
Let $(P,s,l)\in\boR(M)$ be such that all edges of $T$ are regular.

Let $\Delta=\{x,y,z,t\}$ be a simplex in $T$. The intersection of $\tru(\Delta)$ with the boundary of $M$ is a union of 4 triangles $T_x,T_y,T_z,T_t$. Pick any point $u$ in the boundary of $M$ and a path $\gamma$ from $u$ to the interior of $\tru(\Delta)$. Then, extend this path inside $\tru(\Delta)$ to 4 paths $\gamma_x,\gamma_y,\gamma_z,\gamma_t$ ending respectively in $T_x,T_y,T_z,T_t$. The parallel transport of the section $s$ at the end points gives a configuration of 4 points in the fiber of $P\times_G\C P^1$ over $u$. These  points are distinct by the assumption that all edges are regular. Call $X(xyzt)$ the cross-ratio of these 4 points. This number is well-defined and independent of $u$ and $\gamma$. One easily check that this construction defines on $T$ a cross-ratio structure. Only the third equation is not obvious but one can deduce it by choosing a fixed value of $u$ for all tetrahedra adjacent to the same edge.

Suppose that $(P,s,l)$ is represented by a connection $\alpha$ on the trivial bundle $M\times G$.
The 1-cocycle $c$ associated to $X$ is defined on $P(T)$ whereas the holonomy of $\alpha$ is a cocycle $\zeta$ on $\tru(T)$ with values in $\SL$. For each edge $e$ of $T$, we recall that we defined $P(e)=D\times [0,1]$ where $D$ was a regular polygon with boundary $v(e)$. Let us add to $P(e)$ the edge $\{0\}\times[0,1]$ and edges joining the origin to the vertices in $D\times\{0\}$ and $D\times\{1\}$. Let us call $P^+(T)$ this 1-skeleton. We can see $\tru(T)$ and $P(T)$ inside $P^+(T)$ as polyhedral decompositions of the same space. To see that $c$ and $\zeta$ are equivalent, it is sufficient to define a cocycle on $P^+(T)$ whose restriction on $P(T)$ and $\tru(T)$ is $c$ and $\zeta$ respectively. Let us choose arbitrarily the value of the cocycle of an edge in $D\times\{0\}$ joining the center to some vertex. Then, by cocycle relations and assumption on the restrictions of our cocycle, all other edges are determined, and these determinations actually define a cocycle on $P^+(T)$.

It remains to construct the flattening $L$ on $P(T)$. We do it in a geometric way, supposing that the triple $(P,s,l)$ is represented by a flat connection $\alpha$ on the bundle $M\times G$. The section $s$ is then trivial, as the tautological line bundle lying over $s$ which is equal to the bundle $\C\oplus \{0\}$.
For any oriented edge $e$ of $T$, let $\gamma$ be the corresponding path in $M$. Then, $(\hol_{\gamma}\alpha)(1,0)$ and $(1,0)$ are two independent vectors, hence their determinant is a non zero complex number. We choose a logarithm of this number that we call $l_e$. If we consider the edge with opposite orientation, then the determinant gets a minus sign, hence the logarithm differs by $i\pi\mod 2i\pi$. Thanks to the branching, one can specify an orientation of $e$. By convention, assume that the following relation holds: $l_{-e}=l_e-i\pi$ where $-e$ means the edge $e$ with negative orientation.

Moreover, recall from Proposition \ref{equivalence} that the datum $l$ in $(P,s,l)$ is given by the integral of $-\mu(\alpha)$ on the boundary. Using this formula, we may see $l$ as a 1-cocycle on the boundary of $P(T)$.
Given a simplicial path $\gamma$ in $\tru(T)$, we define $l_{\gamma}$ as the sum of the values of the function $l$ on the boundary edges of $\gamma$ and on the interior edges, keeping track of the orientation.
Given a vertex $xyzt$ of $P(T)$, we define $L(xyzt)$ in the following way: pick vertices $\tilde{x},\tilde{y},\tilde{z},\tilde{t}$ in the simplicial triangles $T_x,T_y,T_z,T_t$ and choose simplicial paths $\gamma_{uv}$ in $\tru(\{x,y,z,t\})$ for all $u,v$ in $\{\tilde{x},\tilde{y},\tilde{z},\tilde{t}\}$. Then we set
$L(xyzt)=l_{\gamma_{xz}}+l_{\gamma_{yt}}-l_{\gamma_{xt}}-l_{\gamma_{yz}}$. One can check that this defines a flattening recovering the logarithm $l$, see \cite{dz}.
\end{proof}

\section{Computation of the Chern-Simons invariant}\label{comput}

Suppose that one has a 3-manifold $M$ triangulated by $T$ and an element  $(P,s,l)$ of $\boR(M)$ represented by a triple $(b,X,L)$ where $b$ a branching of $T$, $X$ is a cross-ratio structure and $L$ is a flattening.

\begin{theorem}
$$CS(P,s,l)=\sum_{\Delta\in T} CS(\Delta,b,L,o)$$
\end{theorem}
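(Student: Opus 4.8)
The plan is to read $CS(P,s,l)$ directly off the flat connection $\alpha$ that $(b,X,L)$ defines on $M\cong P(T)$, and to use that the Chern--Simons density $\lg\alpha\wedge[\alpha\wedge\alpha]\ld$ is a genuine $3$-form, hence additive over the cellular decomposition $P(T)=\bigl(\bigcup_i P(\Delta_i)\cup\bigcup_{e}P(e)\bigr)/\Phi$. First I would note that $\alpha$ restricts on each block $P(\Delta_i)$ to precisely the connection computing $CS(\Delta_i,b,L,o)$, the local orientations being those induced by the orientation of $M$ (coherent because $\Phi$ reverses face orientations). The blocks meet along $2$-cells, which are negligible for a volume integral, and across type-face $2$-cells the connection is smooth by the standard-neighbourhood condition built into its construction, so no interface defect arises. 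This gives
\begin{equation*}
CS(P,s,l)=\sum_{\Delta\in T}CS(\Delta,b,L,o)+\sum_{e\in E(T)}I(e),\qquad I(e)=\frac{1}{12}\int_{P(e)}\lg\alpha\wedge[\alpha\wedge\alpha]\ld ,
\end{equation*}
so that the whole theorem reduces to the single claim $\sum_e I(e)\in\Z$, which is what makes the two sides agree in $\C/\Z$.

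Next I would examine one edge block $P(e)=D\times[0,1]$. As it is a $3$-ball, the flat connection trivialises, $\alpha=g^{-1}\d g$ for a map $g\colon P(e)\to\SL$; crucially $g$ is single-valued in $\SL$ rather than only in $\PSL$, precisely because the flattening relation $\sum_i L=0$ along $e$ forces the lifted holonomy around the core edge to be exactly $\id$ (Proposition \ref{lift} together with the defining relations of a flattening). Hence $I(e)=\int_{P(e)}g^*\chi$, which modulo $\Z$ depends only on $g|_{\partial P(e)}$, since two fillings differ by a map of the doubled sphere $S^3\to\SL$ whose $\chi$-integral is an integer, $\chi$ being the integral generator of $H^3(\SL,\Z)$ inside $H^3(\SL,\C)$.

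Finally I would establish $I(e)\in\Z$ by the same mechanism already used for the well-definedness of $CS$ on $\boR(M)$ and on $\boL(\Delta,b)$. The boundary $\partial P(e)$ splits into the two polygonal faces $D\times\{0,1\}$, which lie in $\partial M$ and carry a $\bob$-valued connection (so $g$ there maps into $B$, where $\chi$ vanishes for dimensional reasons), and the side faces, on which $\alpha$ is the explicit type-edge connection \eqref{edge}, involving only the circumferential and core directions and already diagonal, hence Borel-valued, where $\d\phi_s$ is unsupported. Using the trivial $\SL$-holonomy coming from $\sum_i L=0$, one compares $g|_{\partial P(e)}$ with a reference filling assembled from Borel data; the comparison is a degree, hence an integer, yielding $I(e)\equiv0$ and then $\sum_e I(e)\in\Z$.

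The hard part will be exactly this last step: certifying that each edge block $P(e)$ contributes an integer rather than an arbitrary complex number. This is where branchings and flattenings earn their keep, since it is their relations — the vanishing sum $\sum L=0$ around every edge and the Borel form of the connection along the boundary — that convert the a priori free WZW contribution of $P(e)$ into an element of $\Z$, through the vanishing of $\chi$ on the two-dimensional $\bob$ and the integrality of $H^3(\SL,\Z)$. I expect the bookkeeping of the gauge $g$ on $\partial P(e)$, and the explicit identification of the comparison class as a degree, to be the most delicate point of the argument.
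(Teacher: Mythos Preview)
Your overall architecture matches the paper: both arguments decompose $CS(P,s,l)$ as an integral over the cells of $P(T)$ and reduce the theorem to showing that the edge blocks $P(e)$ contribute nothing. The difference is in how that last point is handled, and your version has a gap.

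The paper does not argue abstractly that $I(e)\in\Z$. Instead it \emph{chooses} the filling on $P(e)$: using the flattening relation $\sum L_e=0$ it picks a closed $\C$-valued $1$-form $\omega$ on the polygon $D$ restricting correctly to each boundary segment, and then writes down the explicit connection
\[
\alpha=\omega\begin{pmatrix}\cos(\phi(s)\pi)&i\sin(\phi(s)\pi)\\-i\sin(\phi(s)\pi)&-\cos(\phi(s)\pi)\end{pmatrix}+\begin{pmatrix}0&i\pi/2\\ i\pi/2&0\end{pmatrix}\d\phi_s
\]
on $D\times[0,1]$. A direct computation then gives $\lg\alpha\wedge[\alpha\wedge\alpha]\ld=0$ pointwise, so $I(e)=0$ exactly, not merely modulo $\Z$. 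This is a two-line calculation once the filling is written down, and it is what makes the theorem essentially immediate in the paper.

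Your abstract route via $\alpha=g^{-1}\d g$ and WZW integrality is a reasonable idea, but the step where you claim the side faces carry a Borel-valued connection is wrong. The type-edge connection \eqref{edge} has lower-left entry $\tfrac{i\pi}{2}\d\phi_s-\tfrac{il}{2}\sin(\phi_s\pi)\d\phi_t$, which vanishes only at $s=0,1$; on the interior of the side face the connection is genuinely outside $\bob$. Hence $g|_{\partial P(e)}$ does not land in $B$, the vanishing of $\chi$ on $\bob$ is unavailable there, and your ``comparison class is a degree'' conclusion is unsupported. The very feature you flagged as the delicate point is where the argument actually breaks; the paper sidesteps it entirely by the explicit filling above.
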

This formula is an easy consequence of the fact that $M$ is a union of subpolyhedra. The triple $(P,s,l)$ is represented by an explicit flat connection on $M$ and the Chern-Simons invariant is an integral which decomposes as a sum of integrals over all subpolyhedra.
In the first part, we show that the polyhedra attached to the edges do not contribute to the sum and hence the formula reduces to a sum over elementary polyhedra. In the second part, we explain how the 5-term relation fits in this framework and give some applications and examples.

\subsection{Filling edges}
Around edges, one may need to glue back a polyhedron which has the form $P\times [0,1]$ where $P$ is a plane oriented polygon. By branching conditions, all edges of the form $\{v\}\times [0,1]$ where $v$ is a vertex of $P$ are oriented in the same direction and the restriction of the connection to it is $\mat{ 0&i\pi/2\\ i\pi/2&0}\d \phi_s$.

For any oriented edge $e$ of $P$, the two corresponding edges $e_0=e\times\{0\}$ and $e_1=e\times\{1\}$ are oriented in the same direction and the restriction of the connection to it has the form $\mat{ -L_{e}/2&0\\ 0&L_{e}/2}\d \phi_s$. Here $L_e$ is the value of $L$ at the starting point of $e$ which is opposite to its value at the target point.

By flattening conditions, the sum $\sum_e L_e$ vanishes where the edges $e$ are oriented in a compatible way with the boundary of $P$. One may fill the connection inside $P$ by taking any closed $\C$-valued 1-form $\omega$ which restricts to the corresponding form on each boundary segment. Precisely, we set $\alpha=\mat{ -\omega/2&0\\ 0&\omega/2}$ on $P\times\{0\}$ and $\alpha=\mat{ \omega/2&0\\ 0&-\omega/2}$ on $P\times\{1\}$.

One may fill the connection inside $P\times [0,1]$ by the condition that its restriction to segments of the form $\{v\}\times [0,1]$ is again $\mat{ 0&i\pi/2\\ i\pi/2&0}\d \phi_s$.
A direct computation shows that
$$\alpha=\omega \mat{\cos(\phi(s)\pi)&i\sin(\phi(s)\pi)\\-i\sin(\phi(s)\pi)&-\cos(\phi(s)\pi)}+\mat{ 0&i\pi/2\\ i\pi/2&0}\d \phi_s.$$
One computes directly from this expression that $CS(\alpha)=0$.

\subsection{The 5-term relation}\label{cinq}

As a consequence of such a gluing formula, one can deduce the well-known 5-term relation.
Let $X=\{x_0,\ldots,x_4\}$ be  a set with 5 elements. The union of the tetrahedra $\{x_0,x_1,x_2,x_4\}$ and $\{x_0,x_2,x_3,x_4\}$ is homeomorphic to the union of the following three ones: $\{x_1,x_2,x_3,x_4\}$, $\{x_0,x_1,x_3,x_4\}$ and $\{x_0,x_1,x_2,x_3\}$. Given a global order $x_0<\cdots<x_4$ and a map $L$ from the set of ordered 4-tuples of elements of $X$ to $\C$ satisfying the flattening relations, one deduce the following formula were $\Delta_i$ is the set $X\setminus\{x_i\}$

\begin{equation}\label{cinqtermes}
\sum_{i=0}^4 (-1)^{i}CS(\Delta_i,b,L)=0.
\end{equation}
The sign $(-1)^{i}$ takes into account the orientation of $\Delta_i$. One can deduce from it the precise formula for $CS$ in terms of the $L_i$, finishing the computation of Section \ref{deriv}. In terms of the function $H$ from $]0,1[$ to $\C$, one has for any $0<v<u<1$ the following equality:

$$H(u)-H(v)+H(v/u)-H(\frac{1-u^{-1}}{1-v^{-1}})+H(\frac{1-u}{1-v})=0.$$

Taking $u$ close to $v$, one finds that $\lim_{u\to 1}H(u)=0$. This gives us finally the expression $$H(u)=\frac{1}{24}+\frac{1}{8\pi^2}\int_0^{u}\left(\frac{\log(1-t)}{t}+\frac{\log(t)}{1-t}\right)\d t=\frac{1}{4\pi^2}(\frac{\pi^2}{6}-R(u)).$$

\subsection{An example}
Following Thurston, the figure eight knot complement is homeomorphic to the union of two tetrahedra without vertices.
We call $A$ and $B$ these two tetrahedra and we denote by$x,y,z,t$ their vertices (we use the same letters for the vertices of both tetrahedra).
We identify the faces of these tetrahedra in the only way which respects the colors of the arrows (black or white) and their directions.
Denote by $T$ the resulting abstract triangulation. A cross-ratio structure is determined by the two complex numbers $u=X_A(xyzt)$ and $v=X_B(xyzt)$ different from 0 and 1.
The complex $S(T)$ has two edges which gives the following relations:
\begin{align*}
X_A(tyxz)X_B(tyxz)X_A(xyzt)X_B(xzty)X_A(xzty)X_B(tzyx)=1\\
X_A(yzxt)X_B(yxtz)X_A(txzy)X_B(txzy)X_A(tzyx)X_B(yzxt)=1
\end{align*}
Translating into variables $u$ and $v$ both equations reduce to the equation
$$uv=(1-u)^2(1-v)^2$$
\begin{figure}
\begin{center}
\begin{pspicture}(-0.5,-0.5)(2,7)
\includegraphics{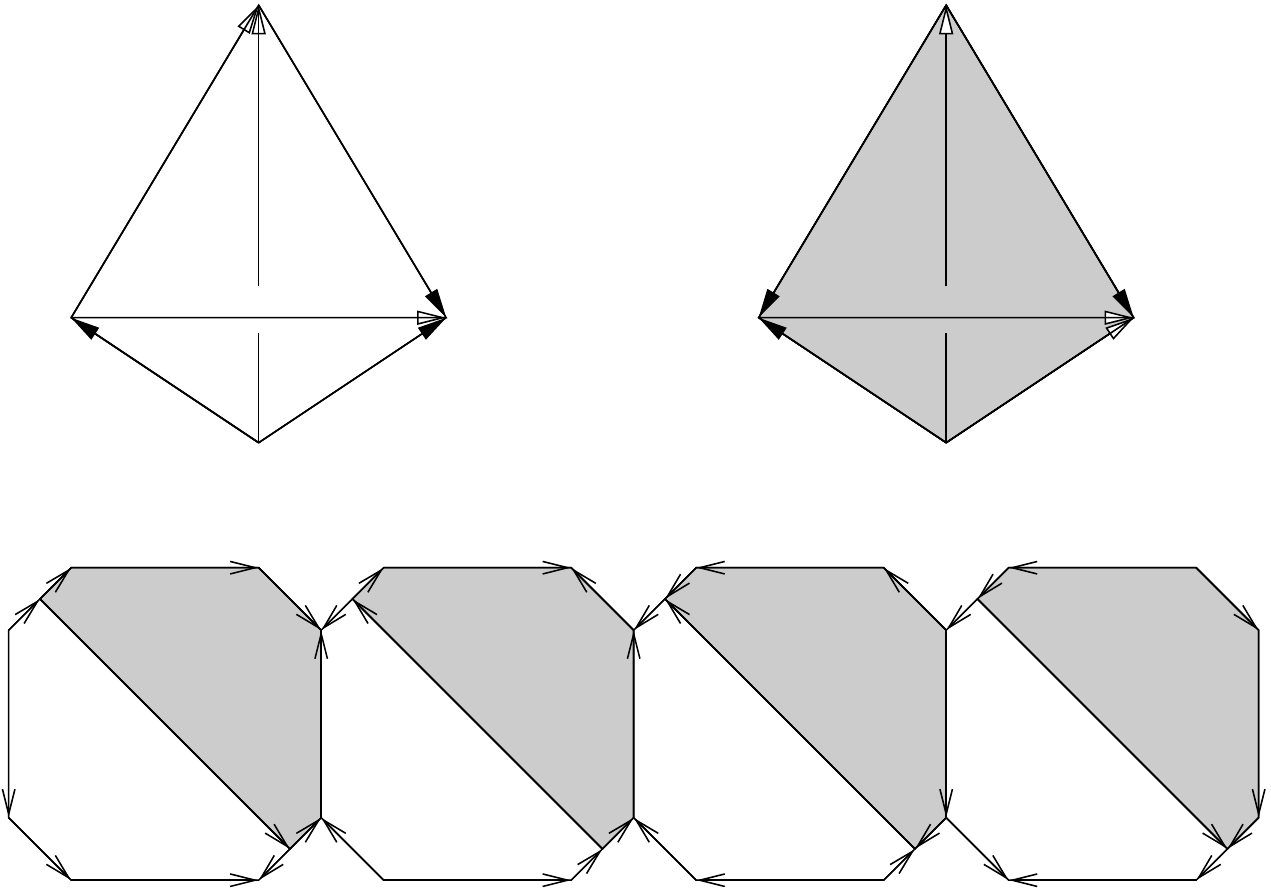}
\put(-12,8){$A$}
\put(-11,4.5){$x$}
\put(-8.2,6){$y$}
\put(-11,8.7){$z$}
\put(-12.5,6){$t$}
\put(-5,8){$B$}
\put(-4,4.5){$x$}
\put(-1.2,6){$y$}
\put(-4,8.7){$z$}
\put(-5.5,6){$t$}

\put(-11,2){$z$}
\put(-7.5,2){$x$}
\put(-4.5,2){$t$}
\put(-1.4,2){$y$}

\put(-12,1){$z$}
\put(-8.5,1){$x$}
\put(-5.3,1){$t$}
\put(-2.2,1){$y$}
\put(-12.5,0.5){$t$}
\put(-12.6,2.3){$x$}
\put(-10.7,0.3){$y$}

\put(-9.2,0.5){$t$}
\put(-9.3,2.3){$y$}
\put(-7.3,0.3){$z$}

\put(-6,0.5){$z$}
\put(-6.1,2.3){$y$}
\put(-4.2,0.3){$x$}

\put(-2.8,0.5){$z$}
\put(-2.9,2.3){$x$}
\put(-1,0.3){$t$}
\put(-12,2.9){$x$}
\put(-10.3,2.8){$y$}
\put(-10,0.8){$t$}

\put(-9,2.9){$t$}
\put(-7.2,2.8){$y$}
\put(-7,0.8){$z$}

\put(-5.8,2.9){$y$}
\put(-4.2,2.8){$x$}
\put(-3.8,0.8){$z$}

\put(-2.7,2.9){$z$}
\put(-0.8,2.8){$x$}
\put(-0.6,0.8){$t$}

\end{pspicture}
\caption{Decomposition of the figure eight knot complement and of its boundary}\label{huit}
\end{center}
\end{figure}
Let $b$ be the branching induced on $T$ by the arrows. One can check that the ordering induced on the vertices of $A$ is given by $x<t<z<y$ whereas the ordering induced on the vertices of $B$ is $x<z<t<y$.
Introduce the following variables:

\begin{center}\begin{tabular}{cc|cc}
$a_1=L_A(xtzy)$ & $a'_1=L_A(zyxt)$ & $b_1=L_B(xzty)$ & $b'_1=L_B(tyxz)$ \\
$a_2=L_A(xzyt)$ & $a'_2=L_A(ytxz)$ & $b_2=L_B(xtyz)$ & $b'_2=L_B(yzxt)$ \\
$a_3=L_A(xytz)$ & $a'_3=L_A(tzxy)$ & $b_3=L_B(xyzt)$ & $b'_3=L_B(ztxy)$ \\
\end{tabular}\end{center}

One has the following first set of relations:

\begin{tabular}{ccc}
$a'_1=a_1,$ & $b'_1=b_1,$ & $a_1+a_2+a_3=i\pi$  \\
$a'_2=a_2-2i\pi,$ & $b'_2=b_2-2i\pi,$ & $b_1+b_2+b_3=i\pi$ \\
$a'_3=a_3,$ & $b'_3=b_3$ &
\end{tabular}

and the edge equations:
$$-2a_2-a_3+2b_1+b_3+2i\pi=0,\quad -2a_1-a_3+2b_2+b_3-2i\pi=0$$

One can reduce these equations to the unknowns $a_1,a_2,b_1,b_2$ and the relation $a_1-a_2+b_1-b_2+2i\pi=0$.

Consider the curve $\alpha$ (resp. $\beta$) on the boundary of $P(T)$ represented by a vertical segment going upwards on the figure \ref{huit} (resp. an horizontal one from left to right). Then $\alpha$ and $\beta$ form a basis for the homology of the boundary. Let us express the logarithmic holonomy along $\alpha$ and $\beta$ in terms of our coordinates. One has $\mu(\alpha)=\frac{i\pi}{2}-a_1-b_1$ and $\mu(\beta)=2i\pi-2a_1-b_2$. The Chern-Simons invariant of the configuration that we are describing is finally equal to $CS(a_1,a_2,a_3)-CS(b_1,b_2,b_3)$.
We thus obtain the same formula as in \cite{neumann2}.

\end{document}